\title{Constrained colouring and $\sigma$-hypergraphs}
\date{}
\begin{document}
\newtheorem{theorem}{Theorem}[section]
\newtheorem{definition}{Definition}[section]
\newtheorem{proposition}[theorem]{Proposition}
\newtheorem{corollary}[theorem]{Corollary}
\newtheorem{lemma}[theorem]{Lemma}

\author{Yair Caro \\ Department of Mathematics\\ University of Haifa-Oranim \\ Israel \and Josef  Lauri\\ Department of Mathematics \\ University of Malta
\\ Malta \and Christina Zarb \\Department of Mathematics \\University of Malta \\Malta }

\maketitle

\begin{abstract}
A constrained colouring or, more specifically, an $(\alpha,\beta)$-colouring of a hypergraph $H$, is an assignment of colours to its vertices such that no edge of $H$ contains less than $\alpha$ or more than $\beta$ vertices with different colours. This notion, introduced by B{\'u}jtas and Tuza, generalises both classical hypergraph colourings and the more general Voloshin colourings of  hypergraphs. In fact, for $r$-uniform hypergraphs, classical colourings correspond to $(2,r)$-colourings while an important instance of Voloshin colourings of $r$-uniform hypergraphs gives $(2, r-1)$-colourings. One intriguing aspect of all these colourings, not present in classical colourings, is that $H$ can have gaps in its $(\alpha,\beta)$-spectrum, that is, for $k_1 < k_2 < k_3$, $H$ would be $(\alpha,\beta)$-colourable using $k_1$ and using $k_3$ colours, but not using $k_2$ colours.

In an earlier paper, the first two authors introduced, for $\sigma$ a partition of $r$, a very versatile type of $r$-uniform hypergraph which they called $\sigma$-hypergraphs. They showed that, by simple manipulation of the parameters of a $\sigma$-hypergraph $H$, one can obtain families of hypergraphs which have $(2,r-1)$-colourings exhibiting various interesting chromatic properties. They also showed that, if the smallest part of $\sigma$ is at least 2, then $H$ will never have a gap in its $(2,r-1)$-spectrum but, quite surprisingly, they found examples where gaps re-appear when $\alpha=\beta=2$.

In this paper we extend many of the results of the first two authors to more general $(\alpha,\beta)$-colourings, and we study the phenomenon of the disappearanace and re-appearance of gaps and show that it is not just the behaviour of a particular example but we place it within the context of a more general study of constrained colourings of $\sigma$-hypergraphs.

\end{abstract}

\section{Introduction} \label{intro}
Let $X=\{x_1,x_2,...,x_n\}$ be a finite set, and let $D=\{D_1,D_2,...,D_m\}$ be a family of subsets of $X$.  The pair $H=(X,D)$ is called a \emph{hypergraph} with vertex- set $V(H)=X$, and with edge-set $E(H)=D$.  When all the subsets are of the same size $r$, we say that $H$ is an \emph{r-uniform hypergraph}. 

A constrained colouring or, more specifically, an $(\alpha,\beta)$-colouring of a hypergraph $H$, is an assignment of colours to its vertices such that no edge of $H$ contains less than $\alpha$ or more than $\beta$ vertices with different colours.

This type of colouring was introduced by B{\'u}jtas and Tuza  in  \cite{bujtastuz09} and studied further in \cite{bujtas2007colour,bujtas2009color,bujtas2010color,bujtasv2011color,bujtastuz13}.  For an $r$-uniform hypergraph, a $(2,r)$-colouring is just a colouring in the classical sense, while a $(2,r-1)$-colouring is what Caro and Lauri called a \emph{non-monochromatic-non-rainbow} (NMNR) colouring in \cite{CaroLauri14}.  This is a special instance of Voloshin colorings of mixed hypergraphs.  The references \cite{dvorakal10,tuza13} are examples of  recent works on colourings of mixed hypergraphs, and \cite{jaffeet12,sen2013,zhanget13} even give applications.  The book \cite{voloshin02} and the up-to-date website \url{http://http://spectrum.troy.edu/voloshin/publishe.html} are recommended sources for literature on all these types of colourings of hypergraphs.

For any hypergraph $H$, an $(\alpha,\beta)$-colouring using exactly $k$ colours is called a $k$-$(\alpha,\beta)$-colouring.  The \emph{lower chromatic number} $\chi_{\alpha,\beta}$ is defined as the least number $k$ for which $H$ has a $k$-$(\alpha,\beta)$-colouring.  Similarly, the \emph{upper chromatic number} $\overline{\chi}_{\alpha,\beta}$ is the largest $k$ for which $H$ has a $k$-$(\alpha,\beta)$-colouring.  These parameters are often simply referred to as $\chi$ and $\overline \chi$, repectively, when $\alpha,\beta$ are clear from the context.  The \emph{$(\alpha,\beta)$-spectrum} of $H$ is the sequence, in increasing order, of all $k$ such that $H$ has a $k$-$(\alpha,\beta)$-colouring.  Again, we often refer to this simply as the spectrum of $H$. Clearly, the first and last terms of this sequence are $\chi_{\alpha,\beta}$ and $\overline{\chi}_{\alpha,\beta}$ respectively.  We say that the chromatic spectrum has a \emph{gap} when there exist integers $k_1<k_2<k_3$ such that the hypergraph is $k_1$- and $k_3$-colourable but not $k_2$-colourable.  

In this paper, we will consider a special type of hypergraph defined in \cite{CaroLauri14}. A $ \sigma$-hypergraph $ H(n,r,q$ $\mid$ $\sigma$), where $\sigma$ is a partition of $r$,  is an r-uniform hypergraph having $nq$ vertices partitioned into $ n$ \emph{classes} of $q$ vertices each.  If the classes are denoted by $V_1$, $V_2$,...,$V_n$, then a subset $K$ of $V(H)$ of size $r$ is an edge if the partition of $r$ formed by the non-zero cardinalities $ \mid$ $K$ $\cap$ $V_i$ $\mid$, $ 1 \leq i \leq n$, is $\sigma$. The non-empty intersections $K$ $\cap$ $V_i$ are called the parts of $K$.  The number of parts of the partition $\sigma$ is denoted by $s(\sigma)$, while the size of the largest and smallest parts of the partition $\sigma$ are denoted by $\Delta=\Delta(\sigma)$ and $\delta=\delta(\sigma)$, respectively.

These $\sigma$-hypergraphs are highly symmetric, often sparse and, as shown in \cite{CaroLauri14}, they seem to offer a unified platform for obtaining hypergraphs with or without gaps in their NMNR-spectrum by suitable modification of their parameters.  These are properties which have often been obtained by ad hoc methods in the literature on colourings of mixed hypergraphs (for example, \cite{jiang05} and the remarkable \cite{gionfriddo04}), but which can often be obtained more systematically and smoothly using $\sigma$-hypergraphs.

In this paper we show that $\sigma$-hypergraphs have the same features and capabilities in the more general setting of $(\alpha,\beta)$-colourings.  The paper \cite{CaroLauri14}  showed that, when $\delta \geq 2$, the corresponding $\sigma$-hypergaph cannot have a gap in its NMNR-spectrum and concluded with an example showing a surprising phenomenon, namely a $\sigma$-hypergraph with $\delta \geq 2$ and with a gap in its $(2,2)$-spectrum.  In fact, the main aim of this paper is to extend the results of \cite{CaroLauri14} to more general $(\alpha,\beta)$-colourings and particularly to show that this phenomenon is not just a sporadic example but a general behaviour of $(\alpha,\beta)$-colourings of $\sigma$-hypergraphs.

Finally, for any two positive integers $p<q$, by the interval $[p,q]$ we mean the sequence of all integer values $k$ such that $p \leq k \leq q$.

This paper is structured as follows.  In Section \ref{techlemma} we present two useful lemmas which we shall often use throughout the paper.  In Section \ref{mz_sec}, we shall consider that part of the chromatic spectrum which we call the monochromatic zone.  We consider when the monochromatic zone exists and determine its range.  We also consider when the spectrum is precisely the monochromatic zone. We then show that  when $ s(\sigma)$ is outside the range $[\alpha,\beta]$, usually there is no monochromatic zone and the corresponding $\sigma$-hypergraph is not $(\alpha,\beta$)-colourable.    In Section \ref{nogaps_sec} we shall show that a $\sigma$-hypergraph has no gaps in its $(2,\beta)$-spectrum when $\delta \geq r-\beta+1$, and in Section \ref{gaps_sec} we shall demonstrate that gaps can reappear for $(\alpha,\beta)$-colourings when $\delta \leq r-\beta$.  We finish off with some open questions and ideas for further extending this research.

\section{Useful Lemmas} \label{techlemma}

Before delving into the presentation of our main results, we give in this section two useful lemmas which we shall often use in this paper.

\begin{lemma} \label{partition1}
Let  $a,b,d$ be positive integers such that $2 \leq a \leq d$ and $a \leq b$. Let \[ f=f(a,b,d)= \max\{\sum_{i=1}^{i=b}{x_i}: x_1 \geq x_2 \geq \ldots \geq x_b \geq 1, x_i \in \mathbb{Z}^+, \sum_{i=1}^{i=a-1}{x_i} \leq d-1 \}.\]

Then \[ f =  (b-a+1) \left \lfloor \frac{d-1}{a-1} \right \rfloor + d-1. \]
\end{lemma}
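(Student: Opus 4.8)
The plan is to prove the two inequalities $f \le (b-a+1)\lfloor (d-1)/(a-1)\rfloor + (d-1)$ and $f \ge (b-a+1)\lfloor (d-1)/(a-1)\rfloor + (d-1)$ separately: the first by bounding an arbitrary feasible sequence, the second by exhibiting a sequence that attains the value. First I would record that the feasible set is non-empty and the maximum well defined: since $a \le d$ we have $d-1 \ge a-1$, so the all-ones sequence satisfies $\sum_{i=1}^{a-1} x_i = a-1 \le d-1$, and the objective is bounded because $a \ge 2$ forces $x_1 \le \sum_{i=1}^{a-1} x_i \le d-1$, whence every $x_i \le d-1$.

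For the upper bound, let $(x_i)$ be any feasible sequence and set $S = \sum_{i=1}^{a-1} x_i$. The key structural observation is that the non-increasing condition forces $x_i \le x_{a-1}$ for every $i \ge a$, so that $\sum_{i=a}^{b} x_i \le (b-a+1)\, x_{a-1}$ and hence $\sum_{i=1}^{b} x_i \le S + (b-a+1)\, x_{a-1}$. I would then bound the two quantities on the right independently: the constraint gives $S \le d-1$ directly, while $x_{a-1}$ being the smallest of $a-1$ terms summing to $S$ gives $(a-1)\, x_{a-1} \le S \le d-1$, and integrality upgrades this to $x_{a-1} \le \lfloor (d-1)/(a-1)\rfloor$. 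Substituting yields $\sum_{i=1}^{b} x_i \le (d-1) + (b-a+1)\lfloor (d-1)/(a-1)\rfloor$, the claimed value.

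For the lower bound I would exhibit an explicit optimal sequence. Writing $m = \lfloor (d-1)/(a-1)\rfloor$ and $d-1 = (a-1)m + t$ with $0 \le t \le a-2$, define $x_1 = \cdots = x_t = m+1$, $x_{t+1} = \cdots = x_{a-1} = m$, and $x_a = \cdots = x_b = m$. One checks that this sequence is non-increasing and positive, that $\sum_{i=1}^{a-1} x_i = t(m+1) + (a-1-t)m = d-1$ meets the constraint, and that its total equals $(d-1) + (b-a+1)m$, matching the upper bound.

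The main obstacle — and the only real content beyond routine manipulation — is recognising that the two independent bounds $S \le d-1$ and $x_{a-1} \le \lfloor (d-1)/(a-1)\rfloor$ can be attained \emph{simultaneously}; this is exactly what the construction verifies, since $t \le a-2$ guarantees that at least one of the first $a-1$ terms equals $m$, so $x_{a-1} = m$ while the initial block of $(m+1)$'s still pushes $S$ up to $d-1$. The reduction that makes the tail terms $x_a,\dots,x_b$ all equal to $x_{a-1}$ is the other point to state carefully, but it is immediate from the non-increasing hypothesis.
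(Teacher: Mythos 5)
Your proof is correct, but its mechanism differs from the paper's. The paper normalises an optimal sequence by exchange arguments: it assumes WLOG that $\sum_{i=1}^{a-1} x_i = d-1$ (otherwise increase $x_1$), smooths the head so that $x_1 - x_{a-1} \le 1$, works out how many head entries equal $\lceil \frac{d-1}{a-1} \rceil$ versus $\lfloor \frac{d-1}{a-1} \rfloor$, sets the tail entries to the floor value, and computes the resulting sum. You instead prove the two inequalities separately: an upper bound valid for \emph{every} feasible sequence, via $\sum_{i=1}^{b} x_i \le S + (b-a+1)x_{a-1}$ combined with $(a-1)x_{a-1} \le S \le d-1$ and integrality, and a matching explicit construction. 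Your route buys rigour and economy: you never need to verify that the exchanges preserve feasibility and the non-increasing order (a point the paper glosses over), and the reason the tail entries cannot exceed $\lfloor \frac{d-1}{a-1} \rfloor$ is explicit in your argument, whereas in the paper it is implicit in the fact that the smoothed head has minimum entry equal to the floor when $(a-1) \nmid (d-1)$. You also address well-definedness of the maximum, which the paper omits. What the paper's smoothing buys is a description of how extremal configurations look, but this is the same sequence you exhibit for the lower bound (head entries as equal as possible summing to $d-1$, tail equal to the floor), so nothing substantive is lost. One detail worth stating explicitly in your construction: positivity of the tail requires $m = \lfloor \frac{d-1}{a-1} \rfloor \ge 1$, which is exactly where the hypothesis $a \le d$ enters; your earlier remark that $d-1 \ge a-1$ covers this.
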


\begin{proof}

We may assume \[\sum_{i=1}^{i=a-1}{x_i} = d-1\]  otherwise we can increase $x_1$  by 1, increasing the value of $f(a,b,d)$.  We may also assume, without loss of generality, that $ x_1- x_{a-1} \leq 1$,  for otherwise we may delete 1 from $x_1$ and add 1 to $x_{a-1}$ and rearrange the largest $a-1$ terms. 

Since \[\sum_{i=1}^{i=a-1} x_i = d-1,\] then each $x_i$, for $1 \leq i \leq a-1$, satisfies \[\left \lfloor \frac{d-1}{a-1} \right \rfloor  \leq x_i \leq \left \lceil \frac{d-1}{a-1} \right \rceil,\] since \[(a-1) \left \lfloor \frac{d-1}{a-1} \right \rfloor  \leq d - 1 \leq (a-1) \left \lceil \frac{d-1}{a-1} \right \rceil.\]  The same condition also holds for $x_i$, $a \leq i \leq b$ since the values are in non-increasing order.  Let $a-1 = h+g$  such that $h$ of these $x_i$'s are equal to $\left \lfloor \frac{d-1}{a-1} \right \rfloor$ and the remaining $g$ values are equal to $\left \lceil \frac{d-1}{a-1} \right \rceil$.  If $(a-1) |  (d-1)$, then the maximum value of $f$ is obtained by taking $g=0$, which gives a sum exactly equal to $d-1$, and hence, $f=b(\frac{d-1}{a-1})$ which is the required maximum.   So assume $(a-1) \not | (d-1)$.  Then  \[h \left \lfloor \frac{d-1}{a-1} \right \rfloor + g\left \lceil \frac{d-1}{a-1} \right \rceil = h \left \lfloor \frac{d-1}{a-1} \right \rfloor + g(\left \lfloor \frac{d-1}{a-1} \right \rfloor + 1) = d - 1,\] hence \[ g= d - 1 - (h+g)\left \lfloor \frac{d-1}{a-1} \right \rfloor =  d-1 -(a-1)\left \lfloor \frac{d-1}{a-1} \right \rfloor. \]  Now, to maximise the value of $f$, we take $x_ j = \left \lfloor \frac{d-1}{a-1} \right \rfloor$ for $j=a,..,b$.

Therefore 
\begin{eqnarray*}
f&=&g\left \lceil \frac{d-1}{a-1} \right \rceil + (b-g)\left \lfloor \frac{d-1}{a-1} \right \rfloor\\
& =&g(\left \lfloor \frac{d-1}{a-1} \right \rfloor + 1) + (b-g)\left \lfloor \frac{d-1}{a-1} \right \rfloor\\
& =&  g + b \left \lfloor \frac{d-1}{a-1} \right \rfloor \\
&=&   b \left \lfloor \frac{d-1}{a-1} \right \rfloor  + d-1 -(a-1)\left \lfloor \frac{d-1}{a-1} \right \rfloor\\
&=&  (b-a+1) \left \lfloor \frac{d-1}{a-1} \right \rfloor + d-1, \mbox{  as refuired}.
\end{eqnarray*}
\end{proof}

\begin{lemma} \label{minb}

Let  $a,b,d$ be positive integers such that $2 \leq a \leq d$ and $a \leq b$.  Let \[b_{min}=\min\{b: \sum_{i=1}^{i=b}{x_i}=n,x_1 \geq x_2 \geq \ldots \geq x_b \geq 1, x_i \in \mathbb{Z}^+, \sum_{i=1}^{i=a-1}{x_i} \leq d-1 \}.\]  Then \[b_{min} =  \left \lceil \frac{ n-(d-1- (a -1)   \left \lfloor \frac{d-1}{a-1} \right \rfloor  ) }{ \left \lfloor \frac{d- 1}{a - 1} \right \rfloor}  \right \rceil. \]
\end{lemma}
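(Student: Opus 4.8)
The plan is to dualize the optimization problem of Lemma \ref{minb} into the one already solved in Lemma \ref{partition1}. In Lemma \ref{partition1} we fixed the number of parts at $b$ and maximized the total sum $\sum_{i=1}^{b} x_i$ subject to $x_1 \geq \cdots \geq x_b \geq 1$ and $\sum_{i=1}^{a-1} x_i \leq d-1$; here instead we fix the total sum at $n$ and minimize the number of parts $b$. These are inverse problems: to represent a fixed total $n$ with as \emph{few} parts as possible, each part should be as large as the constraint permits, and to pack in the largest possible total for a given number of parts, the parts are again taken as large as possible. So I expect the extremal configuration to be essentially the same one identified in Lemma \ref{partition1}, and the strategy is to invert the formula $f(a,b,d) = (b-a+1)\lfloor \frac{d-1}{a-1}\rfloor + d-1$.

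\medskip

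First I would argue that in a minimizing configuration every part beyond the first $a-1$ may be taken equal to $\lfloor \frac{d-1}{a-1}\rfloor$, the common value forced (up to rounding) on the constrained initial block, using exactly the rearrangement/averaging argument of Lemma \ref{partition1}: if some later part were smaller we could merge or enlarge it without violating the constraint $\sum_{i=1}^{a-1} x_i \leq d-1$, thereby not increasing the number of parts needed to reach the sum $n$. The key quantity is the maximum total achievable with a given number of parts, which is precisely $f(a,b,d)$. Thus $b_{\min}$ is the least $b$ for which $f(a,b,d) \geq n$, i.e. the least $b$ satisfying
\[
(b-a+1)\left\lfloor \frac{d-1}{a-1}\right\rfloor + d-1 \;\geq\; n.
\]

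\medskip

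Solving this inequality for $b$ gives a lower bound, and I would then check that the ceiling in the claimed formula matches exactly this threshold. Writing $L = \lfloor \frac{d-1}{a-1}\rfloor$ and $R = d-1-(a-1)L$ (the remainder term appearing in the statement), the inequality $f \geq n$ rearranges to $(b-a+1)L \geq n-(d-1)$, hence $bL \geq n - (d-1) + (a-1)L = n - R$, and so $b \geq (n-R)/L$, giving $b_{\min} = \lceil (n-R)/L\rceil$, which is exactly the stated expression. The delicate point here is confirming that the value of $b$ produced by the ceiling is genuinely \emph{feasible}, not merely a lower bound: one must verify that with $b = \lceil (n-R)/L\rceil$ parts there is an admissible non-increasing sequence with the first $a-1$ parts summing to at most $d-1$ and total exactly $n$. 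Because the maximum attainable sum $f(a,b,d)$ increases by exactly $L$ each time $b$ increases by one, and any total strictly between $f(a,b-1,d)$ and $f(a,b,d)$ can be realized by lowering one of the later parts below $L$ (never breaking the non-increasing order, since those parts sit at the bottom), feasibility of the threshold value follows.

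\medskip

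The main obstacle I anticipate is the careful bookkeeping of the floor/ceiling and remainder terms, particularly handling the boundary case $(a-1)\mid(d-1)$ separately (where $R=0$ and the rounding behaves differently) and ensuring the off-by-one in the ceiling is correct at the exact threshold $f(a,b,d)=n$. This is routine but error-prone arithmetic rather than a conceptual difficulty, since the structural heart of the argument — that the same greedy all-parts-equal-to-$\lfloor\frac{d-1}{a-1}\rfloor$ configuration is simultaneously optimal for both the maximization and the dual minimization — is already supplied by Lemma \ref{partition1}.
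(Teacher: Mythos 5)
Your proposal is correct and follows essentially the same route as the paper: both reduce the problem, via Lemma \ref{partition1}, to finding the least $b$ with $f(a,b,d) \geq n$ and then solve the resulting inequality for $b$ to obtain the ceiling expression. In fact your explicit verification that the threshold value of $b$ is feasible (by lowering a bottom part to interpolate any sum between $f(a,b-1,d)$ and $f(a,b,d)$) is a point the paper's proof glosses over, so your write-up is, if anything, slightly more complete.
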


\begin{proof}

By the previous lemma, we require the minimum value of $b$ such that $q(a,b-1,d) < n \leq q(a,b,d)$ -- this implies that

\begin{alignat*}{3}
(b-1) \left \lfloor \tfrac{d-1}{a-1} \right \rfloor +  d-1 -(a-1)\left \lfloor \frac{d-1}{a-1} \right \rfloor <  n   \leq  b \left \lfloor \frac{d-1}{a-1} \right \rfloor +  d-1 -(a-1)\left \lfloor \frac{d-1}{a-1} \right \rfloor, \mbox{ so} 
\end{alignat*}
\begin{alignat*}{3}
(b-1) \left \lfloor \frac{d-1}{a-1} \right \rfloor & <  n -(d-1 -(a-1)\left \lfloor \frac{d-1}{a-1} \right \rfloor) && \leq  b \left \lfloor \frac{d-1}{a-1} \right \rfloor, \mbox{ hence} \\
\\
b-1 & < \frac{n -( d-1 -(a-1)\left \lfloor \frac{d-1}{a-1} \right \rfloor)}{ \left \lfloor \frac{d-1}{a-1} \right \rfloor}&& \leq  b, 
\end{alignat*}

So, since we require that $b$ is an integer,  \[b_{min} =  \left \lceil \frac{ n-(d-1- (a -1)   \left \lfloor \frac{d-1}{a-1} \right \rfloor  ) }{ \left \lfloor \frac{d- 1}{a - 1} \right \rfloor}  \right \rceil, \]as stated.
\end{proof}

\section{The  Monochromatic Zone} \label{mz_sec}

An important concept is the following. The \emph {$(\alpha, \beta)$-monochromatic zone}, denoted by $M(\alpha,\beta)$, of a $\sigma$-hypergraph, or more simply the \emph{monochromatic zone}, consists of all those integers $k$ for which there exists a $k$-$(\alpha,\beta)$-colouring in which, for every class $V_i$ of the hypergraph, all the vertices in $V_i$ are given the same colour, hence $V_i$ is monochromatic.

It might happen that, for certain values of the parameters, the monochromatic zone is empty.  So the first questions we shall consider will be about the conditions for the existence of the monochromatic zone, and its range when it exists.

The monochromatic zone for an NMNR-colouring of a $\sigma$-hypergraph is $\{k: \lceil \frac{n}{s(\sigma)-1} \rceil \leq  k \leq n\}$  as shown in \cite{CaroLauri14}.  We now consider the monochromatic zone for $(\alpha, \beta$)-colourings of $\sigma$-hypergraphs.

\begin{theorem} \label{LBMZ}
Let $H=H(n,r,q | \sigma)$ and consider $(\alpha,\beta)$-colourings of $H$ for given integers $\beta \geq \alpha \geq 2$.  Suppose $\alpha \leq s(\sigma) \leq \beta$.  Then the $(\alpha,\beta)$-monochromatic zone of $H$ is 
\[M(\alpha,\beta)=\left [ \left\lceil \frac{ n-(s(\sigma)-1- (\alpha -1)   \lfloor \frac{s(\sigma)-1}{\alpha -1} \rfloor) }{  \lfloor \frac{s(\sigma) - 1}{\alpha - 1} \rfloor  }  \right \rceil, n \right ].\]
\end{theorem}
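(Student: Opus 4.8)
The plan is to reduce monochromatic $(\alpha,\beta)$-colourings to ordinary colourings of the $n$ classes and then read off the endpoints from Lemma \ref{minb}. In a monochromatic colouring each class $V_i$ carries a single colour, so such a colouring is simply an assignment of colours to the $n$ classes. Every edge meets exactly $s(\sigma)$ classes, one per part of $\sigma$, and conversely every choice of $s(\sigma)$ classes is met by at least one edge (here I use that $q \geq \Delta(\sigma)$, which lets the parts of $\sigma$ be placed in any prescribed $s(\sigma)$ classes and is in any case needed for edges to exist). Hence the number of colours on an edge depends only on which $s(\sigma)$ classes it meets, and the set of colour-counts occurring over all edges is exactly $\{c(S) : S \in \binom{[n]}{s(\sigma)}\}$, where $c(S)$ denotes the number of distinct colours among the classes in $S$. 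A monochromatic colouring is thus an $(\alpha,\beta)$-colouring if and only if $\alpha \leq c(S) \leq \beta$ for every $s(\sigma)$-subset $S$ of classes.

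First I would dispose of the upper constraint and the top endpoint. Since each edge meets only $s(\sigma)$ classes it carries at most $s(\sigma) \leq \beta$ colours, so the condition $c(S) \leq \beta$ holds automatically: the hypothesis $s(\sigma) \leq \beta$ is precisely what makes the zone viable from above. For the top endpoint, colouring the $n$ classes with $n$ distinct colours gives $c(S) = s(\sigma)$ on every edge, and as $\alpha \leq s(\sigma)$ this is a valid $n$-colouring, whence $n \in M(\alpha,\beta)$.

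The core of the argument is the lower constraint. Writing the colour groups (sets of classes sharing a colour) as $x_1 \geq x_2 \geq \cdots \geq x_k \geq 1$ with $\sum_i x_i = n$, I would show that $c(S) \geq \alpha$ for every $s(\sigma)$-subset $S$ if and only if no $s(\sigma)$ classes can be covered by only $\alpha-1$ colours, that is, if and only if
\[ \sum_{i=1}^{\alpha-1} x_i \leq s(\sigma)-1. \]
The point is that the cheapest way to pick $s(\sigma)$ classes using few colours is to take them from the largest groups, so the minimum of $c(S)$ drops below $\alpha$ exactly when the $\alpha-1$ largest groups already contain $s(\sigma)$ classes. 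Consequently a monochromatic $k$-colouring exists if and only if $n$ admits a partition into exactly $k$ nonincreasing positive parts whose largest $\alpha-1$ parts sum to at most $s(\sigma)-1$. This is precisely the feasibility condition controlled by Lemma \ref{minb} with $a=\alpha$ and $d=s(\sigma)$, and its minimising value of $b$ is exactly the lower endpoint claimed in the theorem.

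Finally I would confirm that $M(\alpha,\beta)$ is the full interval up to $n$ rather than a set with holes, using a monotonicity (splitting) argument. Given a valid partition into $k<n$ parts, some part $x_j$ exceeds $1$; replacing it by the two parts $x_j-1$ and $1$ yields a partition into $k+1$ parts, and since splitting a part can only decrease the partial sums of the largest parts, the constraint $\sum_{i=1}^{\alpha-1} x_i \leq s(\sigma)-1$ is preserved. Thus every $k$ with $b_{\min} \leq k \leq n$ gives a valid monochromatic colouring, no $k<b_{\min}$ does by the minimality in Lemma \ref{minb}, and $k>n$ is impossible as there are only $n$ classes. The main obstacle is the clean justification of the lower-bound equivalence together with the achievability of every intermediate value; once these are secured, the formula follows immediately from Lemma \ref{minb}.
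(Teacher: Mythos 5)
Your proposal is correct and follows essentially the same route as the paper's own proof: reduce a monochromatic colouring to a partition of the $n$ classes into colour groups, observe that validity is equivalent to the largest $\alpha-1$ groups covering at most $s(\sigma)-1$ classes, invoke Lemma \ref{minb} (with $a=\alpha$, $d=s(\sigma)$) for the lower endpoint, and obtain gap-freeness up to $n$ by splitting one class off a repeated colour. If anything, you are slightly more explicit than the paper in justifying the ``if and only if'' covering condition and in noting the role of $q\geq\Delta$ so that every $s(\sigma)$-subset of classes actually supports an edge, but the substance of the argument is the same.
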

 \begin{proof}

Suppose $H$ is coloured such that all its classes are monochromatic.  Clearly, $n$, which is the maximum number of colours possible does give a valid $(\alpha,\beta)$-colouring since, if every class is coloured monochromatically using a different colour, each edge has exactly $s(\sigma)$ colours, and recall that $\alpha \leq s(\sigma) \leq \beta$.

Let $k$ be the number of colours used and let $n_i$ be the number of classes coloured monochromatically by colour $i$, for $ 1 \leq i \leq k$.  First of all, we note that $\sum_{i=1}^{k} n_i = n$, the number of classes.  Let us assume that the $n_i$ are in non-increasing order.   Hence we require the minimum value of $k$ such that  $\sum_{i=1}^{k} n_i = n$ and $\sum_{i=1}^{\alpha - 1} n_i  \leq s(\sigma) - 1$.  Therefore, by Lemma \ref{minb}, the minimum value of $k$ is \[\left\lceil \frac{ n-(s(\sigma)-1- (\alpha -1)   \lfloor \frac{s(\sigma)-1}{\alpha -1} \rfloor) }{  \lfloor \frac{s(\sigma) - 1}{\alpha - 1} \rfloor  }  \right \rceil. \]  Hence the range of the monochromatic zone is as claimed.

Finally, the monochromatic zone is gap-free since, if $H$ is coloured with all classes monochromatic, using less than $n$ colours, a new colour can be added each time by choosing a colour which is repeated more than once, and changing one of the classes using this colour to a new colour, thus increasing the number of colours used by one.  This process can be continued until no colour appears more than once, in which case $n$ colours are used.  Therefore the monochromatic zone is gap-free. 
\end{proof}

So the monochromatic zone gives a gap-free interval in the $(\alpha,\beta)$-spectrum of a $\sigma$-hypergraph when $\alpha \leq s(\sigma) \leq \beta$.  This gap-free interval can be extended if we impose further restrictions of $s(\sigma)$, as shown in the proposition below.

\begin{proposition} \label{KZ}

Consider $H=H(n,r,q| \sigma)$ such that $ \alpha \leq s=s(\sigma) \leq \beta = sk+t$, where $k \geq 1$ and $0 \leq t \leq s-1$.  Then the $(\alpha,\beta)$-spectrum of $H$ contains the interval \[\left [ \left\lceil \frac{ n-(s-1- (\alpha -1)   \lfloor \frac{s-1}{\alpha -1} \rfloor) }{  \lfloor \frac{s - 1}{\alpha - 1} \rfloor}  \right \rceil, \min \{nq, nk + \beta -ks\} \right ].\]
\end{proposition}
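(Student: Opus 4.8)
The plan is to build on Theorem \ref{LBMZ}, which already places the gap-free monochromatic interval $[L,n]$ into the spectrum, where $L$ is exactly the lower endpoint appearing in the statement. Since $\beta = sk+t$ gives $nk+\beta-ks = nk+t$, it remains to show that every integer $k'$ with $n \le k' \le \min\{nq, nk+t\}$ lies in the $(\alpha,\beta)$-spectrum. The idea is to colour the classes using pairwise disjoint palettes and to cap the number of colours allowed inside each class, so that the upper constraint reduces to a simple counting bound while the lower constraint becomes automatic.

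First I would fix colour caps: allow $t$ of the classes to use up to $k+1$ colours each and the remaining $n-t$ classes to use up to $k$ colours each, always drawing from pairwise disjoint palettes. For the upper bound, note that every edge meets exactly $s$ classes, and since only $t \le s-1$ classes may exceed $k$ colours, among any $s$ classes at most $t$ carry $k+1$ colours; as a part of an edge lies inside a single class and therefore shows at most as many colours as that class uses, every edge shows at most $t(k+1)+(s-t)k = sk+t = \beta$ colours. For the lower bound, disjointness of the palettes forces the $s$ parts of an edge to exhibit at least $s \ge \alpha$ distinct colours. Hence any colouring obeying these caps is a valid $(\alpha,\beta)$-colouring, and the largest number of colours it can use is $t(k+1)+(n-t)k = nk+t$, provided each capped class has enough vertices, i.e. $q \ge k+1$.

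Next I would treat the cap $\min\{nq,nk+t\}$ by splitting into two regimes governed by a single inequality. When $nk+t \le nq$ one checks that $q \ge k+1$ (otherwise $nk+t > nq$), so the value $nk+t$ is realisable as above. When $nk+t > nq$, a short count using $t \le s-1 < s \le n$ forces $q \le k$; in that regime colouring every vertex with its own colour uses all $nq$ colours, and each edge then shows $r \le sq \le sk \le \beta$ colours and at least $s \ge \alpha$, so $nq$ is realisable. In either case the top endpoint of the interval is attained by a valid colouring.

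The remaining, and most delicate, step is to rule out gaps strictly between $n$ and the top endpoint. I would start from the monochromatic $n$-colouring (each class a single distinct colour, which is the top of the monochromatic zone) and repeatedly split one vertex off into a fresh colour, always respecting the per-class caps. Each split raises the total colour count by exactly $1$; refining a colouring never decreases the number of colours seen on any edge, so the bound $\ge s \ge \alpha$ is preserved, while the caps keep every edge $\le \beta$ by the counting argument above. As long as the current total lies below the target, some class sits strictly below its cap and, since its cap is at most $q$, contains a repeated colour available for splitting, so the process passes through every intermediate value. The main obstacle is precisely this simultaneous control of both constraints throughout the interpolation: one must verify that the cap bookkeeping holds every edge at or below $\beta$ at each intermediate stage while monotonicity of refinement secures the $\ge \alpha$ bound, and that the two endpoint regimes ($nk+t$ versus $nq$) are cleanly separated by whether $q \le k$ or $q \ge k+1$.
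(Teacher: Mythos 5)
Your proof is correct in substance and rests on the same two pillars as the paper's: Theorem \ref{LBMZ} covers the sub-interval $[L,n]$ (where $L$ is the stated lower endpoint), and the colourings above $n$ are built from pairwise disjoint per-class palettes of roughly $k$ colours, so that every edge automatically shows at least $s\geq\alpha$ and at most $sk+t=\beta$ colours. Where you genuinely differ is in how the interval $[n,\min\{nq,nk+t\}]$ is swept. The paper anchors at an $(nk)$-colouring (exactly $k$ colours in every class) and moves in two directions with two different mechanisms: downward to $n$ by repeatedly merging, within a class, one colour into a fixed colour of that class, and upward to $nk+\beta-ks$ by handing fresh singleton colours to at most $\beta-ks$ vertices. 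You instead anchor at the monochromatic $n$-colouring and run a single upward sweep, splitting one vertex at a time off to a fresh colour subject to per-class caps ($k+1$ colours for $t$ designated classes, $k$ for the rest); the cap bookkeeping gives the $\leq\beta$ bound uniformly at every stage, and monotonicity of refinement gives $\geq\alpha$. Your version is arguably tidier, since one invariant controls the entire interpolation, and your two-regime treatment of the $\min$ (rainbow colouring when $q\leq k$) handles directly what the paper disposes of structurally: the paper notes that if $H$ has edges (and $r>\beta$) then $q\geq\Delta\geq k+1$, hence $nq\geq n(k+1)>nk+t$, so the $nq$ branch of the $\min$ only matters in the degenerate no-edge cases ($q<\Delta$ or $n<s$), which it treats separately. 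One small slip in your regime analysis: $nk+t\leq nq$ does not force $q\geq k+1$, because when $t=0$ and $q=k$ both sides equal $nq$; no harm results, though, since when $t=0$ no class is capped at $k+1$ and $q\geq k$ already suffices to realise the target $nk$. Similarly, your counting step $t\leq s-1<s\leq n$ tacitly assumes $n\geq s$, which, like the paper, you should either assume or dismiss via the no-edge degeneracy.
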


\begin{proof}
Consider the $\sigma$-hypergraph  $H$ as stated.  We first observe that if $q < \Delta$ or $n < s(\sigma)$, then $H$ has no edges and we can trivially colour the vertices using up to $nq$ colours.   

So we may assume that $q \geq \Delta$ and  $n \geq s(\sigma)$.  The latter implies that $\beta - sk \leq s - 1 < n$.   Also, as  $s\Delta \geq r > \beta$ and hence $\frac{s\Delta}{k}  > \frac{\beta}{k} \geq s$, it follows that $\Delta >k$ and hence $q \geq \Delta > k$.  Thus the number of vertices in $H = nq \geq n(k+1) \geq nk + \beta - sk$, making the upperbound of the interval attainable.

Since $s \in [\alpha,\beta]$, the monochromatic zone is defined as in Theorem (cite MZ theorem).   Consider a colouring  such that each class is coloured using precisely $k$ colours, with distinct sets of $k$ colours for each  different class, that is we use $nk$ colours altogether.  Consider any edge of $H$:  it contains at most $ks \leq k \left \lfloor \frac{\beta}{k} \right \rfloor  \leq \beta$ colours, and since $s \geq \alpha$, it also contains at least $\alpha$ colours.  Hence this is a valid $(\alpha,\beta)$-colouring of $H$.

So let us consider this $(nk)$-colouring of $H$.  Consider an edge containing $r$  vertices.  As observed, this edge has at most $ks$ colours. Let us consider one vertex of each colour of each part of the edge to keep the original colour.  Now consider the remaining vertices: let us change the colour of up to $\beta - ks$ vertices, giving each vertex a new colour.  Then the resulting edge has at least $\alpha$ colours because we preserved the original colours, and has at most $ks + \beta - ks = \beta$ colours, making it a valid $(\alpha,\beta)$-coloured edge.  Similarly, any other edge which intersects the vertices which have received new colours, has at least $\alpha$ and at most $\beta$ colours, and any edge which does not intersect these vertices remains unchanged.  Thus we have valid $(nk)$- up to $(nk+\beta-ks)$-colourings of $H$.

As we have seen above the interval $[nk, nk + \beta - ks ]$ is gap-free .  So we have to consider the interval $[n,nk]$.  For $k = 1$ we have nothing to prove. Assume $k > 1$.  We start with the $(nk)$-colouring described above. In each class we fix a colour to be
unchanged. Now we select all the vertices in a given class having another colour distinct from the fixed one , and recolour them by the colour we fixed.

This gives an $(nk-1)$-coloring which is valid because every edge is still coloured with at least $\alpha$ colours and at most $\beta$ colours. This recolouring process is now repeated colour after colour, class after class each time reducing the number of colours used by 1 and keep the resulting colouring legitimate, until each class is monochromatic with the colour we fixed initially, which is disctinct in each class. Hence the range $[n,nk]$ is in the spectrum and is gap-free.

Hence, for any value $k \geq 1$, the interval \[\left [ \left\lceil \frac{ n-(s-1- (\alpha -1)   \lfloor \frac{s-1}{\alpha -1} \rfloor) }{  \lfloor \frac{s - 1}{\alpha - 1} \rfloor  }  \right \rceil, \min \{nq, nk + \beta -ks\} \right ].\] is contained in the chromatic spectrum of $H$, and is gap free.

\end{proof}

One way to obtain hypergraphs with a gap-free chromatic spectrum is to construct $\sigma$-hypergraphs where the chromatic spectrum is completely covered by the monochromatic zone.  As we shall see in Theorem \ref{nomz},  for the monochromatic zone to exist, $ s(\sigma)$ must lie in the interval $[\alpha,\beta]$.  Proposition \ref{KZ} shows that if $s(\sigma) < \beta$, we can always extend the monochromatic zone.  So we consider the case when $s(\sigma)= \beta$  and show that for large enough values of $n$ and $q$, there are $\sigma$-hypergraphs with $(\alpha,\beta)$-spectrum equal to the monochromatic zone.

\begin{theorem}
Consider $H=H(n,r,q | \sigma)$ with $\Delta \geq 2$ , $2 \leq \alpha \leq s(\sigma) =\beta$, such that $n \geq s(\sigma)^2 $ and $q \geq (\Delta-1)\beta +1$.  Then the only legitimate $(\alpha,\beta)$-colourings of $H$ are those in the monochromatic zone, and hence the $(\alpha,\beta)$-spectrum is precisely the monochromatic zone.

\end{theorem}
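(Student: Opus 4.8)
The plan is to show that \emph{every} legitimate $(\alpha,\beta)$-colouring of $H$ must colour each class monochromatically; since $s(\sigma)=\beta$ lies in $[\alpha,\beta]$, Theorem~\ref{LBMZ} then identifies the set of such colourings with the monochromatic zone and guarantees that this is a gap-free interval, so the spectrum is exactly $M(\alpha,\beta)$. Thus I argue by contradiction: assume a valid colouring in which some class, say $V_1$, is not monochromatic, so $V_1$ carries two vertices of distinct colours $a\neq b$. The two numerical hypotheses enter as follows. Because $\beta=s$, where $s:=s(\sigma)$, every edge --- which has exactly $s$ parts --- may display at most $s$ colours, so any part showing two colours must be compensated by colour repetitions among the other parts. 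Because $q\geq(\Delta-1)\beta+1$, a pigeonhole count shows that any class using at most $\beta=s$ colours has some colour occurring on at least $\Delta$ vertices; and since every part of $\sigma$ has size at most $\Delta$, such a class can furnish a monochromatic part of that colour of \emph{any} prescribed size.

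The heart of the proof is a dichotomy obtained by always assigning the largest part (size $\Delta\geq 2$) to $V_1$ and choosing its $\Delta$ vertices to include one of colour $a$ and one of colour $b$, so that this part alone already shows two colours. Call a colour \emph{fresh} if it differs from $a$ and $b$. First I would ask whether there exist $s-1$ classes other than $V_1$, together with $s-1$ \emph{distinct} fresh colours, one present in each of these classes. If so, I complete the edge by placing the remaining $s-1$ parts of $\sigma$ in these classes so that each part contains its fresh colour; the resulting edge displays the $s+1$ distinct colours $a,b$ and the $s-1$ fresh ones, contradicting $\beta=s$. This existence question is precisely a bipartite matching (fresh colours versus the classes containing them), so by K\"onig's theorem its failure produces a vertex cover of size at most $s-2$, say $p$ classes and $q'$ fresh colours with $p+q'\leq s-2$.

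In the remaining case, every class outside this cover (and distinct from $V_1$) may use only $a$, $b$ and the at most $q'$ covered fresh colours, hence at most $2+q'\leq s$ colours in total. By the pigeonhole consequence of the $q$-bound, each such class carries some colour on at least $\Delta$ vertices, drawn from a palette of size at most $2+q'$. There are at least $n-1-p\geq s^2-s+1$ such classes, so a counting argument --- this is exactly where $n\geq s^2$ is used, and it turns out to be essentially tight --- forces at least $s$ of them to carry one common colour $c^{*}$. Choosing $s$ of these classes and letting each supply a monochromatic $c^{*}$-part of the appropriate size yields a monochromatic edge, that is, one with a single colour, contradicting $\alpha\geq 2$. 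Since both branches are impossible, no class can be non-monochromatic.

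I expect the main obstacle to be the careful bookkeeping that makes the two numerical hypotheses meet exactly: setting up the matching so that the unequal part sizes of $\sigma$ can always be realised simultaneously in distinct classes, and then verifying that the cover bound $p+q'\leq s-2$ together with $n\geq s^2$ leaves enough classes sharing a single dominant colour to assemble the monochromatic edge. Everything else reduces to the two structural facts above together with the standard matching/covering duality.
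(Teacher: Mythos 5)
Your proposal is correct, and it reaches the conclusion by a genuinely different route from the paper. The paper argues globally: it first shows that at most $s-1$ classes can contain more than $\beta$ colours (otherwise a greedy choice of parts yields a $(\beta+1)$-coloured edge), then uses the pigeonhole bound on $q$ to give every remaining class a ``chosen'' colour repeated at least $\Delta$ times; since $n\geq s^2$, either $s$ classes share a chosen colour (monochromatic edge) or $s+1$ classes have pairwise distinct chosen colours, and a hands-on case analysis (a stray vertex carrying another chosen colour, or an unchosen colour, etc.) then forces first these $s+1$ classes and afterwards all remaining classes to be monochromatic. You instead localise the contradiction at a single non-monochromatic class $V_1$, plant the $\Delta$-part there so it already shows two colours, and organise everything by matching/covering duality: either there is a system of $s-1$ distinct fresh colours represented in $s-1$ distinct other classes, giving an edge with at least $s+1=\beta+1$ colours, or K\"onig's theorem yields a cover with $p$ classes and $q'$ colours, $p+q'\leq s-2$, which confines every uncovered class to a palette of size at most $2+q'\leq s-p$; your counting then goes through: there are at least $n-1-p\geq s^2-1-p$ uncovered classes, and since $(s^2-1-p)/(s-p)>s-1$ for all $p\geq 0$ and $s\geq 2$, at least $s$ of them share a dominant colour (each dominant by the pigeonhole consequence of $q\geq(\Delta-1)\beta+1$), producing a monochromatic edge that violates $\alpha\geq 2$. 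So both proofs terminate in the same two contradictions (an over-coloured edge or a monochromatic one), but yours compresses the paper's several explicit edge constructions into one clean dichotomy and makes completely transparent where each numerical hypothesis enters, at the cost of invoking K\"onig's theorem; the paper's argument is longer but entirely elementary and self-contained.
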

\begin{proof}

Consider $H=H(n,r,q | \sigma)$ with the stated conditions on $\Delta,s = s(\sigma),\alpha$, $\beta, q$ and $n$. Let $\sigma=(a_1,a_2,..,a_s)$.   If there are at least $s$ classes, say $V_1,,,V_s$, such that each contains at least $\beta + 1$ colours, then we can choose $a_ j$ vertices with distinct colours from class $V_ j$ ,  $j=1..s$ until we have chosen $\beta+1$ colours, which we can do since the sum of the parts of $\sigma$  is $r \geq \beta + 1$.  This gives an edge with more than $\beta$ colours.

So we may assume there are at most $s-1$  such classes.  All the other classes contain at most $\beta$ distinct colours (but the colours in distinct classes can be different).   As $ q \geq (\Delta-1)\beta +1$, each such class contains a colour that repeats at least $\Delta$ times.  Let us call these colours the ``chosen colours''.

Since there are $n \geq  s^2 = ( s-1)s + s$  classes and at most $s-1$ classes contain more than $\beta$ colours, there are at least $(s-1)s +1$ classes in which at least one colour is repeated at least $\Delta$ times.  Hence  there are either $s$ classes in which the same chosen colour is repeated at least $\Delta$ times or $s+1$ classes in which the ``chosen colours'' that repeat $\Delta$ times are all distinct.  The former case gives a monochromatic edge so we are done.  So assume the classes with the distinct chosen colours are $V_1,.., V_{s+1}$ with chosen colours $1,..,s+1$ respectively, and recall that $s =\beta$  and $\Delta \geq 2$. Now consider some class among $V_1,..,V_{s+1}$, say $V_1$, which is non-monochromatic.  Hence it  contains a vertex $x$ whose colour is different from its chosen colour 1: this vertex can have one of the chosen colours, say colour $2$ without loss of generality, or it can have a colour different from all the chosen colours.  In the former case we can form an edge by taking the $\Delta$ part of $\sigma$ from $V_1$ to include colours $1$ and $2$, and the remaining $s-1$ parts from $V_3,..,V_{s+1}$.  This edge would include $2+s-1 = s+1 = \beta + 1$ colours, therefore it is not possible.   On the other hand, if the colour of $x$ is different from all the chosen colours, we can still form a $(\beta+1)$-coloured edge by taking the $\Delta$-part from $V_1$ to include this new colour and colour 1, and the remaining $s-1$ parts from $s-1$ classes among $V_2,..,V_{s+1}$.

Hence $V_1,.., V_{s+1}$ are  monochromatic each using one of $ s+1$ distinct colours. Consider the other classes, $V_{s+2},..,V_n$.  We show that these classes must also be monochromatic.  Consider a class $V_k$ for some $k > s+1$  containing two colours.  If both colours are not amongst the chosen colours, then a $(\beta+1)$-coloured edge can be formed by taking the $\Delta$-part from $V_k$ to include these two colours, and the other $s-1$ parts from any $s-1$  classes out of $V_1,..V_{s+1}$.

 If, on the other hand, one of the colours in $V_k$ is a chosen colour, say it is colour 1 in $V_1$ , and the other colour is distinct from the chosen colours, we can form an edge by taking the $\Delta$-part from $V_k$ to include a vertex coloured 1 and one with the other colour, and taking the other $s-1$ parts from $V_2,..,V_{s}$.  Such an edge would also include $\beta + 1$ colours. 

Finally,  assume that both colours in $V_k$ are chosen colours, say colours 1 and 2.   We can form an edge by taking the $\Delta$-part from $V_k$ to include both chosen colours, and the remaining $s-1$ parts from $V_3,..,V_{s+1}$, giving a $(\beta + 1)$-coloured edge.  

This means that all classes $V_{s+2},..,V_{n}$ must be monochromatic, and therefore that  all $n$ classes must be monochromatic, implying that the only valid colourings of $H$ are those in the monochromatic zone.

\end{proof}

It is possible that a $\sigma$-hypergraph does not have a monochromatic zone.  It is also possible that a $\sigma$-hypergraph does not even have any $(\alpha,\beta)$-colouring for some range of the parameters.  Both these scenarios can arise when $s(\sigma)$ is outside the interval $[\alpha,\beta]$, as the next result shows.

\begin {theorem} \label{nomz}

Let $H=H(n,r,q \mid \sigma)$ and consider an $(\alpha,\beta)$-colouring of $H$.  Then
\begin{enumerate}
\item{For $s(\sigma) < \alpha \leq \beta $, there is no monochromatic zone.}
\item{For $\alpha \leq \beta < s(\sigma) $, there is no monochromatic zone when \[n \geq (\beta-\alpha+1) \left \lfloor \frac{s(\sigma)-1}{\alpha-1} \right \rfloor + s(\sigma).\]}
\item{ Suppose $1 \leq s(\sigma) < \alpha \leq \beta < r$, $q \geq \beta(\Delta-1) +1$ and $n \geq 2s(\sigma) -1$.  Then in any colouring of $H$, there is always a $k$-coloured edge, where either $k<\alpha$ or $k>\beta$, that is $H$ is not $(\alpha,\beta)$-colourable.}
\item{Suppose $1 < \alpha \leq \beta < s(\sigma) < r$, $q \geq \beta(\Delta-1) +1$ and \[n \geq \left \lfloor \frac{s(\sigma) -1)}{\alpha-1} \right \rfloor(s(\sigma)-\alpha)+ 2s(\sigma)-1.\]  Then in any colouring of $H$ there is always a $k$-coloured edge, where either $k < \alpha$ or $k>\beta$, that is $H$ is not $(\alpha,\beta)$-colourable. }
\end{enumerate}

\end{theorem}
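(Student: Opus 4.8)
The plan is to treat the four parts in two groups: parts (1) and (2) concern only monochromatic colourings, while parts (3) and (4) rule out \emph{all} colourings and will share a common dichotomy. Throughout I write $s=s(\sigma)$ and assume $H$ actually has edges, i.e. $q\geq\Delta$ and $n\geq s$ (otherwise every colouring is vacuously legitimate and the monochromatic zone is all of $[1,n]$); for parts (3) and (4) this is automatic from $q\geq\beta(\Delta-1)+1\geq\Delta$ and $n\geq 2s-1$. The recurring tool will be Lemma \ref{partition1}, applied each time with $(a,b,d)=(\alpha,\beta,s)$, for which I abbreviate $f=f(\alpha,\beta,s)=(\beta-\alpha+1)\lfloor\frac{s-1}{\alpha-1}\rfloor+s-1$.

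For part (1), if every class is monochromatic then any edge meets exactly $s$ classes and so carries at most $s<\alpha$ colours, violating the lower constraint; hence no monochromatic colouring is legitimate. For part (2) I would first note that, since $q\geq\Delta$, any $s$ classes span an edge, so a legitimate monochromatic colouring with colour-class sizes $n_1\geq\cdots\geq n_k$ must satisfy two conditions: $k\leq\beta$ (else choosing classes from $\beta+1$ distinct colours yields a $(>\beta)$-edge) and $\sum_{i=1}^{\alpha-1}n_i\leq s-1$ (else $s$ classes drawn from the $\alpha-1$ largest colours give a $(<\alpha)$-edge). By Lemma \ref{partition1}, the largest $n$ admitting such a colouring is exactly $f$, so the hypothesis $n\geq f+1$ makes a monochromatic colouring impossible and the zone is empty.

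For parts (3) and (4) the strategy is a dichotomy on how many classes use many colours. If at least $s$ classes each use at least $\beta+1$ colours, I build a $(>\beta)$-edge: choosing these $s$ classes and filling the $r=\sum a_i>\beta$ edge-slots one class at a time, at each slot the current class still offers an unused colour (it has $\geq\beta+1>|C|$ colours whenever the accumulated set $C$ has $|C|\leq\beta$), so after $\beta+1$ slots we have accumulated $\beta+1$ distinct colours. Otherwise at most $s-1$ classes use $\geq\beta+1$ colours, so at least $n-(s-1)$ classes use at most $\beta$ colours; since $q\geq\beta(\Delta-1)+1$, pigeonhole gives each such class a \emph{dominant colour} repeated at least $\Delta$ times. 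For part (3), $n\geq 2s-1$ leaves at least $s$ dominant classes; selecting $s$ of them and taking each part entirely within its dominant colour produces an edge with at most $s<\alpha$ colours, which violates the lower constraint.

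For part (4) I would view the dominant colours as a monochromatic colouring of the $N\geq n-(s-1)$ dominant classes and argue exactly as in part (2): if this induced colouring uses more than $\beta$ colours we get a $(>\beta)$-edge, while if its $\alpha-1$ largest colour-classes cover at least $s$ classes we get a $(<\alpha)$-edge; to avoid both, Lemma \ref{partition1} (monotone in its middle argument, so fewer than $\beta$ dominant colours still gives the bound $f$) forces $N\leq f$. The decisive calculation is that the hypothesis gives $N\geq M:=\lfloor\frac{s-1}{\alpha-1}\rfloor(s-\alpha)+s$, and
\[ M-f=\left\lfloor\frac{s-1}{\alpha-1}\right\rfloor(s-\beta-1)+1\geq 1 \]
because $s>\beta$; thus $N>f$, contradicting avoidance and forcing a violating edge. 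I expect the main obstacle to be this bookkeeping in part (4): correctly isolating the two failure modes of the induced dominant colouring, justifying the monotonicity of $f$, and confirming the displayed inequality $M>f$. The greedy construction of the $(>\beta)$-edge and the pigeonhole producing dominant colours are routine once the dichotomy is in place.
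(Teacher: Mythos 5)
Your proposal is correct and follows essentially the same route as the paper: the same pigeonhole dichotomy (classes with at least $\beta+1$ colours versus classes with a colour repeated at least $\Delta$ times), the same greedy construction of a $(\beta+1)$-coloured edge, and the same use of Lemma \ref{partition1} to bound the number of classes coverable by the monochromatic/dominant colours. The only deviation is cosmetic: in part (4) you split on whether the dominant colours number more than $\beta$ (the paper splits on whether they number at least $s(\sigma)$), so you invoke the lemma with $b=\beta$ instead of $b=s(\sigma)-1$; this is immaterial under the stated hypothesis on $n$, and your explicit greedy justification, monotonicity remark, and handling of the edgeless degenerate case are in fact slightly more careful than the paper's write-up.
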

\begin{proof}

1)  Since $s(\sigma) < \alpha$, and an edge intersects at most $s(\sigma)$ colour classes, the classes cannot all be monochromatic.  Hence there is no monochromatic zone.

2)  Suppose $H$ is coloured such that all its classes are monochromatic.  Since $s(\sigma) > \beta$, the number of distinct colours used is at most $\beta$, otherwise there is an edge which uses more than $\beta$ colours.  

Also, at least $\alpha$ colours are needed, and any set of $\alpha-1$ colours can cover at most $s(\sigma)-1$ classes, otherwise we get an $(\alpha-1)$-coloured edge

So, given $\beta$ colours $1, \ldots, \beta$, let $n_i$ represent the number of classes which receive colour $i$, $1 \leq i \leq \beta$.  Let these $n_i$ be listed in non-increasing order.  Clearly $\sum_{i=1}^{i=\beta}{n_i}=n$,  and we require  the condition $\sum_{i=1}^{i=\alpha-1}{n_i} \leq s(\sigma)-1$.  Hence, by Lemma \ref{partition1}, $n \leq f(a,b,s)$.  Thus if \[n \geq (\beta-\alpha+1) \left \lfloor \frac{s(\sigma)-1}{\alpha-1} \right \rfloor + s(\sigma),\] there is no monochromatic zone.

3)  Suppose $q \geq  \beta(\Delta-1)+1$. Then, by the pigeon-hole principle, in every class $V_ j$ there are
either $\beta+1$ or more distinct colours, or there is colour which appears at least $\Delta$ times.    

Suppose there are $s(\sigma)$ classes in which there is a colour which appears $\Delta$ times. Then we can choose an edge from these classes that uses exactly $s(\sigma) < \alpha$ colours. Hence, we may assume there are at most $s(\sigma)-1$ such classes, with the remaining classes each containing at least $\beta+1$ distinct colors.  Now, since $n \geq 2s(\sigma) - 1$, there are at least $s(\sigma)$ classes containing at least $\beta+1$ colours.  Consider $\sigma = (a_1,a_2,..,a_s)$.   Then if we take $a_1$ distinct colors from the first class, $a_2$ distinct colours from the second class, and continue in this fashion, we obtain an edge which is $(\beta+1)$-coloured, which is always possible since $\beta < r$. 

4)  Suppose $q \geq  \beta(\Delta-1)+1$. Then, as in part 1, in every class $V_ j$ there are either $\beta+1$ or more distinct colours, or there is a colour which appears $\Delta$ or more times.    

Suppose there are $\left \lfloor \frac{s(\sigma) -1)}{\alpha-1} \right \rfloor(s(\sigma)-\alpha)+s(\sigma)$  classes  in which, in each class, there is a colour which appears $\Delta$ times.  Let us call these colours the $\Delta$-colours.  Suppose that less than $s(\sigma)$ $\Delta$-colours are used across these classes.  Then by Lemma \ref{partition1}, taking $a=\alpha, d=s(\sigma)$ and $b=s(\sigma)-1$, $\alpha-1$ of these $\Delta$-colours cover $s(\sigma)$ classes, because
\begin{eqnarray*}
&& \left \lfloor \frac{s(\sigma) -1)}{\alpha-1} \right \rfloor(s(\sigma)-\alpha)+s(\sigma) \\
&>& \left \lfloor \frac{s(\sigma) -1)}{\alpha-1} \right \rfloor(s(\sigma)-\alpha)+s(\sigma)-1
=f(\alpha,s(\sigma)-1,s(\sigma)).
\end{eqnarray*}
This  results in an $(\alpha-1)$-coloured edge.  On the other hand, if  $s(\sigma)$ or more $\Delta$-colours are used, then we have an edge with exactly $s(\sigma) > \beta$ colours.

 Hence, we may assume there are at most $\left \lfloor \frac{s(\sigma) -1)}{\alpha-1} \right \rfloor(s(\sigma)-\alpha)+s(\sigma)-1$   such classes, with the remaining classes each containing at least $\beta + 1$ distinct colours.  But as $n \geq \left \lfloor \frac{s(\sigma) -1)}{\alpha-1} \right \rfloor(s(\sigma)-\alpha)+ 2s(\sigma)-1$,  there are at least $s(\sigma)$ such classes, which, as in part 1 of this proof, result in a $(\beta+1)$-coloured edge.

 \end{proof}

In the next sections, the use of the monochromatic zone will prove to be an important technique for constructing $\sigma$-hypergraphs with or without gaps in their chromatic spectrum in this manner:  we do this by first establishing the monochromatic zone and then studying whether gaps exist above and below this zone.  However, before leaving this section, we shall give two examples which show that the existence of this zone is not the full story behind the presence or absence of gaps in the chromatic spectrum.  Detailed proofs are given in the appendix at the end of this paper.

Let $H=H(n,12,6 \mid \sigma=(6,6)$, and $(\alpha,\beta)=(3,3)$.  Thus $s(\sigma)=2 < \alpha=3$, and there is no monochromatic zone. It can easily be shown that $H$ is $3$-$(3,3)$-colourable and $(n+1)$-$(3,3)$-colourable, but not $4$-$(3,3)$-colourable, so we have a $\sigma$-hypergraph with no monochromatic zone, but which is colourable and has a gap in its $(3,3)$-spectrum.

Now consider $H=H(n,4,2 \mid \sigma=(2,2))$ and $(\alpha,\beta)=(3,3)$.  So again $s(\sigma)=2 < \alpha=3$ and there is no monochromatic zone.  It can be shown that, for $n>4$, $H$ is only $(n+1)$-$(3,3)$-colourable for $n \geq 4$.  So this time we have a $\sigma$-hypergraph with no monochromatic zone which is $(3,3)$-colourable and has no gaps in its $(3,3)$-spectrum.

\section{ $(2,\beta)$-colourings: No Gaps when $\delta \geq r - \beta + 1$ and $2 \leq s(\sigma) \leq \beta$} \label{nogaps_sec}

In this section we shall start to investigate the disappearance and reappearance of gaps in the chromatic spectrum of $\sigma$-hypergraphs which was first observed in \cite{CaroLauri14}.  In that paper it was shown that for $\delta \geq 2$, a $\sigma$-hypergraph cannot have gaps in its NMNR-spectrum, but then the possibility of gaps reappeared when $\delta \geq 2$ for $(2,2)$-colourings, $r=4$.  In this and the next section, we shall generalise these results:  to $(2,\beta)$-colourings for the case of no gaps, and to general $(\alpha,\beta)$-colourings for the re-appearance of gaps.

Consider a $(2,\beta)$-colouring for $H=H(n,r,q \mid \sigma)$ such that $\alpha \leq s(\sigma) \leq \beta$ and $\delta \geq r-\beta + 1$.  We will show that there are no gaps in the $(\alpha,\beta)$ chromatic spectrum in this case.  First of all observe that, given an edge, any $s(\sigma)-1$ parts of the edge have at most $r - (r- \beta + 1) = \beta-1$ vertices altogether.  So any $s(\sigma)-1$ parts of an edge use at most $\beta - 1$ colours.  This observation is important for the following results.  In all cases we assume that $2 \leq s(\sigma) \leq \beta$.

We first start with two ``recolouring'' lemmas.

\begin{lemma} \label{recolour1}
Let $H=H(n,r,q \mid \sigma)$ with  $\delta \geq r-\beta + 1$. Suppose we are given a $(2,\beta)$-colouring of $H$.  Suppose $V$ is a class of $H$ and that all colours of $V$ are changed into a new colour $z$.  Then the new colouring of $H$ is a valid $(2, \beta)$-colouring.
\end{lemma}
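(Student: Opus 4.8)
The plan is to show that recolouring an entire class $V$ with a fresh colour $z$ cannot violate either the lower bound $\alpha=2$ or the upper bound $\beta$ on the number of colours in any edge. I would split the edges into those that meet $V$ and those that do not. Any edge $K$ disjoint from $V$ is completely unaffected by the recolouring, so it still satisfies the $(2,\beta)$-condition by hypothesis. Thus the whole argument reduces to edges $K$ that have a part inside $V$.

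So suppose $K$ is an edge with $K \cap V \neq \emptyset$; write $K = (K \cap V) \cup K'$ where $K'$ is the union of the parts of $K$ lying outside $V$. For the \emph{lower bound}, note that after recolouring, the vertices in $K \cap V$ all carry the new colour $z$, which appears nowhere else (it is brand new), while $K'$ is non-empty because $s(\sigma) \geq 2$ forces $K$ to meet at least two classes. Hence $K$ contains the colour $z$ together with at least one colour from $K'$, giving at least $2 = \alpha$ distinct colours, so the lower bound is safe. For the \emph{upper bound}, the key observation is the one flagged just before the lemma: since $\delta \geq r-\beta+1$, any $s(\sigma)-1$ parts of an edge span at most $\beta-1$ vertices and therefore carry at most $\beta-1$ colours. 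The part $K \cap V$ is exactly one of the $s(\sigma)$ parts of $K$, so $K'$ consists of $s(\sigma)-1$ parts and uses at most $\beta-1$ colours. After recolouring, all of $K \cap V$ collapses to the single colour $z$, so the total number of colours on $K$ is at most $(\beta-1)+1 = \beta$. This establishes the upper bound.

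Combining the two estimates, every edge meeting $V$ has between $2$ and $\beta$ colours after the recolouring, and every edge avoiding $V$ is unchanged and hence already valid; therefore the new colouring is a legitimate $(2,\beta)$-colouring. I expect the only genuinely substantive step to be the upper-bound estimate, and in particular the clean use of the $\delta \geq r-\beta+1$ hypothesis to bound the number of colours on the $s(\sigma)-1$ external parts by $\beta-1$; everything else is bookkeeping. One subtlety worth stating explicitly is that the bound on $K'$ must hold for \emph{every} edge meeting $V$, which is why it is important that the observation about $s(\sigma)-1$ parts is a structural fact about $\sigma$ and $\delta$ rather than something depending on the particular colouring.
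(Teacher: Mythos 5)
Your proof is correct and follows essentially the same route as the paper's: both split edges according to whether they meet $V$, use $s(\sigma)\geq 2$ and the freshness of $z$ for the lower bound, and invoke the observation that $\delta \geq r-\beta+1$ forces any $s(\sigma)-1$ parts of an edge to carry at most $\beta-1$ colours for the upper bound. Nothing to add.
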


\begin{proof}

Any edge of $H$ which does not intersect $V$ remains a $(2, \beta)$-coloured edge in the new colouring.  Let $K$ be an edge which intersects $V$.  Since $K$ intersects another $s(\sigma)-1 \geq 1$ classes, then K still uses at least 2 colours since the new colour  $z$ appears only in the class $V$.  Also, since as previously observed, any $s(\sigma)-1$ parts of the edge use at most $\beta -1$ colours, the edge contains at most $\beta$ colours.  So the new colouring is still a valid $(2,\beta)$-colouring.
\end{proof}

\begin{lemma} \label{recolour2}
Let $H=H(n,r,q \mid \sigma)$ with  $\delta \geq r-\beta + 1$.  Suppose we are given a $(2,\beta)$-colouring of $H$. Suppose $V$ is a class of $H$ and that $x, y$ are two different colours in V which do not appear in any other class of $H$.  Suppose that all occurences of the colours $x$ and $y$ in $V$ are changed into a new colour $z$.  Then the new colouring of $H$ is also a valid $(2,\beta)$-colouring.
\end{lemma}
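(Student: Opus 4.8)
The plan is to mirror the structure of Lemma \ref{recolour1} and reduce this lemma to the previous one by a counting argument on colours. The key observation, as before, is that since $\delta \geq r-\beta+1$, any $s(\sigma)-1$ parts of any edge together contain at most $\beta-1$ vertices, and hence use at most $\beta-1$ colours. First I would partition the edges of $H$ into those that miss $V$ and those that meet $V$; the former are untouched by the recolouring and so remain valid $(2,\beta)$-coloured edges. The whole argument therefore concentrates on an edge $K$ that intersects $V$.

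For such an edge $K$, I need to verify both bounds. For the upper bound, the recolouring only merges colours inside $V$, so the number of colours contributed by the part $K \cap V$ can only decrease, while the colours in the other $s(\sigma)-1$ parts are unchanged; those other parts contribute at most $\beta-1$ colours by the observation above, and the single part $K\cap V$ contributes at most one new colour beyond these (in fact the merged colour $z$), so $K$ uses at most $\beta$ colours after recolouring. For the lower bound of $2$, I would argue as follows: $K$ meets at least one class other than $V$, say a class $W$, and the colours appearing in $W$ are untouched. The crucial point is the hypothesis that $x$ and $y$ appear \emph{only} in $V$: this guarantees that the new colour $z$ (which replaces $x,y$) does not appear in $W$, so the colour seen in $K\cap W$ is genuinely different from $z$, giving at least two colours on $K$.

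The main obstacle I anticipate is the lower-bound (non-monochromatic) case, specifically ruling out that $K$ becomes monochromatic in colour $z$. This could only fail if every part of $K$ outside $V$ were also coloured $z$; but $z$ is a brand-new colour introduced only in $V$, so no vertex outside $V$ carries colour $z$, and since $K$ meets another class, $K$ cannot be monochromatic in $z$. I would state this explicitly to close the gap. It is worth noting that, unlike Lemma \ref{recolour1}, here we do not need to worry about whether $K \cap V$ itself still sees two colours — even if $K\cap V$ is now entirely $z$, the presence of a differently-coloured vertex in another part of $K$ suffices for the lower bound, and the upper bound is handled purely by the $s(\sigma)-1$ parts estimate. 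Assembling these two bounds over all edges meeting $V$, together with the untouched edges, yields that the new colouring is a valid $(2,\beta)$-colouring, completing the proof.
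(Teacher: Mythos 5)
Your case decomposition and your lower-bound argument are fine and essentially match the paper's, but your upper-bound argument has a genuine gap. The claim that after recolouring ``the single part $K\cap V$ contributes at most one new colour beyond'' the colours of the other $s(\sigma)-1$ parts is false in general, and so is your closing remark that ``the upper bound is handled purely by the $s(\sigma)-1$ parts estimate''. That estimate suffices in Lemma \ref{recolour1} because there the \emph{whole} class $V$ is recoloured, so $K\cap V$ is monochromatic in $z$ after the change. In Lemma \ref{recolour2} only the occurrences of $x$ and $y$ are merged into $z$; any other colours of $V$ lying in $K\cap V$ survive, and they need not occur in the other parts of $K$. For instance, take $\sigma=(3,3)$, $r=6$, $\beta=4$ (so $\delta=3=r-\beta+1$) and an edge $K$ with $K\cap V$ coloured $\{x,y,c\}$ and $K\cap W$ coloured $\{d,d,d\}$: after recolouring, $K\cap V=\{z,z,c\}$ contributes \emph{two} colours ($z$ and $c$) beyond the other part's single colour $d$. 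The conclusion ($3\le\beta$) still holds, but not by your inequality ``$\le(\beta-1)+1$''. The structural problem is that your bound never invokes the fact that the \emph{original} edge had at most $\beta$ colours, nor (for the upper bound) the hypothesis that $x$ and $y$ occur only in $V$; without these, the two pieces you bound separately can sum to as much as $2(\beta-1)$, which exceeds $\beta$ once $\beta\geq 3$.

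The paper closes this by an exchange (colour-counting) argument on $K$ itself: if $K$ contains a vertex coloured $x$ or $y$, then, because $x$ and $y$ occur in no class other than $V$, \emph{every} occurrence of $x$ (and of $y$) on $K$ is replaced, so $K$ loses at least one colour ($x$ or $y$, and both if both were present) and gains at most one new colour ($z$); since $K$ had at most $\beta$ colours in the original valid colouring, it still has at most $\beta$ afterwards. Note that this is where the ``$x,y$ appear only in $V$'' hypothesis does its second job --- your proposal uses it only for the lower bound. As written, your argument does not establish the upper bound; it can be repaired by replacing the parts estimate with this exchange count, after which the rest of your proof goes through.
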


\begin{proof}
As before, if $K$ is a edge which does not interesect $V$, then it remains unchanged and therefore is a $(2,\beta)$-coloured edge in the new colouring. Therefore suppose that $K$ intersects $V$. If it did not contain any of the colours $x,y$ in the original colouring, it would again remain unchanged and therefore a $(2,\beta)$-coloured edge in the new colouring. Therefore suppose first that $K$ contains at least two vertices coloured $x$ or $y$ in the original colouring. Hence every edge of the original colouring that contains both colours $x$ and $y$ or just one of these colours has lost at least one colour and gained at most one new colour.  Also, since $z$ does not appear in any other class and $s(\sigma)\geq 2$, $K$ contains some other colour from another class which is not $z$. Therefore $K$ contains at least 2 colours.

Now suppose that $K$ contains only one vertex which is coloured $z$ in the new colouring. Then, as before, $K$ is non-monochromatic, since $s(\sigma)\geq2$ and $z$ is a new colour appearing only in $V$. Also, suppose, without loss of generality, that the vertex coloured $z$ in $K$ was coloured $x$ in the original colouring---therefore only one vertex was coloured $x$ in $K$. Recall that $K$ had at most $\beta$ colours, and $x$ appeared only in class $V$.  Hence any edge which included the colour $x$ has lost one colour and gained the new colour $z$, and still has at most $\beta$ colours.

So this is still a valid $(2,\beta)$-colouring of $H$.
\end{proof}

We now divide the no gaps argument into two parts: above the monochromatic zone and below.

\begin{proposition} \label{abovemono}
The hypergraph $H=H(n,r,q|\sigma)$ with $\delta(\sigma)\geq r - \beta + 1$ and $2 \leq s(\sigma) \leq \beta$  cannot have a gap above the monochromatic zone in its $(2,\beta)$-spectrum.

\end{proposition}

\begin{proof}
 Let us start with $k_0$ being the largest integer for which $H$ has a $k_0$- $(2,\beta)$- colouring. If $k_0=n$ or even if $k_0=n+1$ then we are done. We may therefore assume that $k_0>n+1$ and that therefore not all classes of $H$ are monochromatic. Choose that $k_0$- $(2,\beta)$- colouring of $H$ which has the largest number $m_0$ of monochromatic classes amongst all $k_0$- $(2,\beta)$- colourings.  Let $V_0$ be a non-monochromatic class of $H$. 

Suppose first that all colours in $V_0$ appear in some other class of $H$. Replace all the colours in $V_0$ with a new colour $z$. By Lemma \ref{recolour1}, this gives a $(2,\beta)$- colouring of $H$. But this colouring uses $k_0+1$ colours. This case is therefore impossible by the maximality of $k_0$. 

Therefore suppose that $V_0$ contains just one colour $x$ which is not in some other class. Again we change all colours in $V_0$ to the colour $z$ and again we obtain a $(2,\beta)$- colouring, this time with $k_0$ colours but with one more monochromatic class. This is also impossible by the maximality of $m_0$.     

Lastly, we suppose that $V_0$ contains at least two colours $x$ and $y$ which appear only in $V_0$. We replace all occurrences of these two colours in $V_0$ with the new colour $z$. By Lemma \ref{recolour2}, we again have a  $(2,\beta)$- colouring, but this time with $k_0-1$ colours. 

It is important for later to observe here that the number of monochromatic classes of this $(k_0-1)$-$(2,\beta)$-colourings is at least $m_0$.

We have therefore shown that $H$ has a $(k_0-1)$- $(2,\beta)$- colouring. If $k_0-1=n+1$ then we are done. 
Otherwise we again proceed as above. In general, at the $j$-th step of this procedure, we start with a $(k_0-j)$- $(2,\beta)$- colouring such that $k_0-j$ is still greater than $n+1$. Therefore the colouring has some non-monochromatic class $V_j$. We start with a $(k_0-j)$- $(2,\beta)$- colouring which has a maximum number $m_j$ of monochromatic classes. By the above observation, $m_j\geq m_{j-1}$. By re-colouring the vertices of $V_j$ as we did before, we either obtain a $(k_0-j+1)$- $(2,\beta)$- colouring with strictly more than $m_j$ monochromatic classes, which is impossible since $m_j\geq m_{j-1}$, or a $(k_0-j-1)$- $(2,\beta)$- colouring, as required. We emphasise that the $(k_0-j-1)$- $(2,\beta)$- colouring we end up with has at least as many monochromatic classes as the original $(k_0-j)$- $(2,\beta)$- colouring, ensuring that, at the next stage, $m_{j+1} \geq m_j$.

Proceeding this way we eventually show that $H$ has a $k$- $(2,\beta)$- colouring with $k$ all the way from $k_0$ down to $n+1$, confirming that it has no gaps above the monochromatic zone.
\end{proof}

We now show, using similar ideas, that, when $\delta(\sigma)\geq r - \beta + 1$, a $\sigma$-hypergraph cannot have gaps in its $(2,\beta)$-spectrum below the monochromatic zone.

\begin{proposition}
Let $H=H(n,r,q|\sigma)$ be a $\sigma$-hypergraph with $\delta(\sigma)\geq r-\beta+1$ and $2 \leq s(\sigma) \leq \beta$. Then $H(n,r,q|\sigma)$ cannot have a gap in its $(2,\beta)$-spectrum below the monochromatic zone.
\end{proposition}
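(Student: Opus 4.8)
The plan is to locate precisely what remains to be shown and then mirror the argument of Proposition~\ref{abovemono}. By Theorem~\ref{LBMZ} with $\alpha=2$ (so $\lfloor\frac{s-1}{\alpha-1}\rfloor=s-1$ where $s=s(\sigma)$), the monochromatic zone is the interval $[\,\lceil\frac{n}{s-1}\rceil,\,n\,]$, and it is gap-free. Combined with Proposition~\ref{abovemono} and the gap-freeness of the zone itself, the only thing missing for a gap-free $(2,\beta)$-spectrum is that the values in $[\chi,\lceil\frac{n}{s-1}\rceil]$ are all attained; that is, we must connect the lower chromatic number $\chi$ to the bottom of the zone. The plan is to do this by traversing this interval one colour at a time using Lemmas~\ref{recolour1} and~\ref{recolour2}, working throughout with an \emph{extremal} colouring and the potential function equal to the number of monochromatic classes, exactly as in Proposition~\ref{abovemono}.

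For the traversal step I would fix a value $k$ with $\chi<k\leq\lceil\frac{n}{s-1}\rceil$ and, among all valid $k$-$(2,\beta)$-colourings, choose one with the maximum number of monochromatic classes. Since $k$ lies at or below the bottom of the zone, this colouring cannot have all classes monochromatic, so some non-monochromatic class $V$ exists. I would then classify $V$ by the number $u$ of colours \emph{private} to $V$ (colours appearing in no other class). If $u\geq 2$, Lemma~\ref{recolour2} merges two private colours of $V$ into a single fresh colour and produces a valid $(k-1)$-colouring, which is the descent we want. If instead $u\leq 1$, recolouring all of $V$ into one fresh colour (Lemma~\ref{recolour1}) yields either a valid $k$-colouring with strictly more monochromatic classes (when $u=1$) or a valid $(k+1)$-colouring (when $u=0$). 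The maximality of the number of monochromatic classes excludes the $u=1$ outcome, and at the bottom end the minimality of $\chi$ excludes a genuine descent below $\chi$; these are precisely the exclusions used in Proposition~\ref{abovemono}, now read in the opposite direction. Throughout, validity is free: the lower bound holds because a fresh colour sits in only one class while $s\geq 2$, and the upper bound holds because, by $\delta\geq r-\beta+1$, any $s-1$ parts of an edge span at most $\beta-1$ vertices, so any edge meeting a monochromatic class automatically carries at most $\beta$ colours.

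The main obstacle I anticipate is crossing the bottom of the monochromatic zone rather than any individual recolouring. Every move above tends to \emph{increase} the number of monochromatic classes, so a naive process stalls exactly when the extremal colouring becomes all-monochromatic, namely at the zone itself, and then produces none of the fewer-colour, non-monochromatic colourings that populate $[\chi,\lceil\frac{n}{s-1}\rceil)$. The real work is therefore to guarantee that, below the zone, a genuine descent is always available while (i) never creating a monochromatic edge---this is governed by the fact that a single colour can occupy at most $s-1$ classes, which is what pins the zone bottom at $\lceil\frac{n}{s-1}\rceil$---and (ii) never exceeding $\beta$ colours on an edge, governed as above by $\delta\geq r-\beta+1$. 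The subtle bookkeeping issue is that merging two private colours inside $V$, or absorbing $V$ into a fresh colour, can convert a \emph{shared} colour in some other class $W$ into a colour private to $W$, thereby changing the case at $W$. I expect the decisive step to be a global count of private versus shared colours that controls this interaction and shows the chosen extremal colouring always admits a class of the type needed to move one step closer to $\chi$; establishing that monotone progress, not the recolouring lemmas, will be the crux.
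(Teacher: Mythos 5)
Your proposal has a genuine gap, and it sits exactly where you yourself placed it: the direction of the traversal. You propose to descend from the bottom of the monochromatic zone towards $\chi$, but this direction cannot be made to work with the tools you invoke. First, a descent from level $k$ to $k-1$ presupposes that a $k$-$(2,\beta)$-colouring exists, which below the zone is precisely what is to be proved; the only level where existence is known a priori is the zone bottom $\lceil\frac{n}{s-1}\rceil$, and there the extremal colouring (maximum number of monochromatic classes) is all-monochromatic, so no non-monochromatic class $V$ exists and no move is available. Second, even granting a $k$-colouring below the zone, your case analysis does not close: when the non-monochromatic class $V$ has $u\geq 2$ private colours you descend, and $u=1$ is excluded by maximality, but when $u=0$ (every colour of $V$ appears in another class) Lemma~\ref{recolour1} only produces a $(k+1)$-colouring with more monochromatic classes --- an \emph{ascent}, which your framework neither excludes nor exploits. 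Your appeal to ``the minimality of $\chi$'' does not help here, since minimality only forbids colourings with fewer than $\chi$ colours, while the $u=0$ case produces one with more. So the process can stall, and your closing paragraph concedes that the decisive step (your ``global count of private versus shared colours'') is missing; that anticipated fix is not what resolves the problem.

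The paper's proof reverses the induction: it starts at $k_0=\chi$, whose existence is given by definition, and climbs \emph{up} to $\lceil\frac{n}{s-1}\rceil-1$. With that orientation the roles of the cases flip into a complete analysis: at stage $j$, with $m_j$ the maximum number of monochromatic classes among $(k_0+j)$-colourings, the case $u\geq 2$ yields a colouring with strictly \emph{fewer} colours, say $k_0+i$ with $i<j$, and $m_j+1$ monochromatic classes, contradicting the maximality of $m_i$ via the monotonicity $m_0<m_1<\cdots<m_j$; the case $u=1$ contradicts the maximality of $m_j$ at the current level; and the case $u=0$ --- your dead end --- becomes the productive step, giving a $(k_0+j+1)$-colouring with at least $m_j+1$ monochromatic classes, which both advances the climb and establishes $m_{j+1}>m_j$, the very monotonicity the contradictions rely on. Incidentally, the bookkeeping issue you flag (a merge turning a shared colour of another class $W$ into a private one) is harmless in either orientation, because a fresh extremal colouring is chosen at each level, so no private/shared status needs to be tracked across stages; the real crux is the one above.
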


\begin{proof}

We shall proceed very much as in Proposition \ref{abovemono} . Recall that the monochromatic zone starts at $k=\left \lceil \frac{n}{s-1} \right \rceil$ since $\alpha=2$. Start with a $k_0$-$(2, \beta)$- colouring of $H$ with the least value of $k_0$. If $k_0= \left \lceil \frac{n}{s-1} \right \rceil - 1$ or $k_0= \left \lceil \frac{n}{s-1} \right \rceil$, then we are done. So suppose that $k_0\leq \left \lceil \frac{n}{s-1} \right \rceil -2$. Therefore the colouring has a non-monochromatic class $V_0$. Choose that $k_0$-$(2, \beta)$- colouring with a maximal number $m_0$ of monochromatic classes. By a suitable recolouring of the vertices of $V_0$ we either obtain a colouring with more than the maximal number of monochromatic classes or a $k_0+1$-colouring and then repeat the process. We shall describe the general case when we are at stage $j$.

In this case, we have a $(k_0+j)$-$(2, \beta)$- colouring where $k_0+j$ is still less than $\left \lceil \frac{n}{s-1} \right \rceil -1$. Therefore the colouring has a non-monochromatic class $V_j$. We choose a $(k_0+j)$-$(2, \beta)$- colouring with a maximal number $m_j$ of monochromatic classes, and we observe that $m_j>m_{j-1}$. We then have these possibilities.

If $V_j$ has at least two colours $x,y$ which do not appear in any other class of $H$, we colour all the vertices of $V_j$  with a new colour $z$. This gives a legitimate $(2, \beta)$- colouring, by Lemma \ref{recolour1} but with strictly less colours than $k_0+j$ (say, $k_0+i$ colours, $i<j$), and one more monochromatic class (that is, $1+m_j$ monochromatic classes). This is a contradiction since $1+m_j > m_j > m_i$, for all $i<j$, therefore the new colouring has more monochromatic classes than the maximum possible for a $(k_0+i)$-$(2, \beta)$- colouring, which is $m_i$.

So suppose that $V_j$ has only one special colour $x$ which does not appear in any other class of $H$. Again we re-colour all the vertices of $V_j$ using the colour $z$, giving another $(k_0+j)$-$(2, \beta)$- colouring by Lemma \ref{recolour1} but with one more monochromatic class.  This is a contradiction to the maximality of $m_j$.

The last remaining case is therefore when every colour in $V_j$ appears in some other class of $H$. We now replace all the colours  of the vertices in $V_j$ by a new colour $z$. This gives us a $(k_0+j+1)$-$(2, \beta)$- colouring again by Lemma \ref{recolour1}. Note that the number of monochromatic classes has also increased, ensuring that, at the next step, $m_{j+1}$ will be larger than $m_j$.

Proceeding this way we finally achieve a $k$-$(2, \beta)$- colouring with 
$k=\left \lceil \frac{n}{s-1} \right \rceil -1$, giving us the required result. 
\end{proof}

Since, as we have seen, there is always a monochromatic zone in the $(\alpha,\beta)$-spectrum for $\alpha \leq s(\sigma) \leq \beta$, which is gap-free, we can conclude the following result, which generalises, for $(2,\beta)$-colourings, a result in \cite{CaroLauri14}.

\begin{theorem} \label{nogaps2beta}
Let $H=H(n,r,q \mid \sigma)$ with $\alpha \leq s(\sigma) \leq \beta$ and $\delta \geq r-\beta + 1$.   Then $H(n,r,q \mid \sigma)$ has no gaps in its $(2,\beta)$ spectrum.
\end{theorem}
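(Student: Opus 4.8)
The plan is to assemble the theorem directly from the three structural facts already established, since between them they cover the whole spectrum in three consecutive segments. First I would note that, because $\alpha = 2$ and $2 \leq s(\sigma) \leq \beta$, Theorem \ref{LBMZ} guarantees a non-empty $(2,\beta)$-monochromatic zone occupying exactly the interval $M = [\lceil n/(s-1)\rceil, n]$; here the general lower bound of Theorem \ref{LBMZ} collapses to $\lceil n/(s-1)\rceil$ because $\alpha - 1 = 1$ makes $\lfloor (s-1)/(\alpha-1)\rfloor = s-1$ and the subtracted term vanishes. This interval is non-empty since $s \geq 2$ forces $\lceil n/(s-1)\rceil \leq n$, and Theorem \ref{LBMZ} also tells us that $M$ is itself gap-free.

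Next I would invoke Proposition \ref{abovemono}, which, under the very hypotheses $\delta \geq r - \beta + 1$ and $2 \leq s(\sigma) \leq \beta$, produces a $k$-$(2,\beta)$-colouring for every $k$ from $n+1$ up to the upper chromatic number $\overline{\chi}_{2,\beta}$; thus the segment $[n+1, \overline{\chi}_{2,\beta}]$ is gap-free and meets $M$ at the junction between $n$ and $n+1$. Symmetrically, the companion proposition proved immediately afterwards shows that $H$ has a $k$-$(2,\beta)$-colouring for every $k$ from the lower chromatic number $\chi_{2,\beta}$ up to $\lceil n/(s-1)\rceil - 1$, so the segment $[\chi_{2,\beta}, \lceil n/(s-1)\rceil - 1]$ is gap-free and meets $M$ at the junction between $\lceil n/(s-1)\rceil - 1$ and $\lceil n/(s-1)\rceil$.

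Concatenating the three gap-free segments --- below, within, and above the monochromatic zone --- shows that every integer $k$ with $\chi_{2,\beta} \leq k \leq \overline{\chi}_{2,\beta}$ is realised, which is precisely the absence of gaps. Since essentially all the work has been done in the preceding propositions, there is no serious obstacle here; the only point genuinely needing verification, and the one place where a gap could hide, is the alignment at the two junctions. But because the three intervals are bounded precisely by the consecutive values $\lceil n/(s-1)\rceil - 1$, $\lceil n/(s-1)\rceil$, $n$, $n+1$, they telescope with no integer left uncovered, so nothing in the spectrum is skipped and the proof closes.
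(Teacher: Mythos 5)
Your proposal is correct and follows essentially the same route as the paper: the theorem there is stated precisely as the concatenation of the gap-free monochromatic zone from Theorem \ref{LBMZ} (which, as you note, collapses to $[\lceil n/(s-1)\rceil, n]$ when $\alpha=2$) with Proposition \ref{abovemono} and its companion proposition covering the segments above and below that zone. Your explicit check that the three intervals meet at consecutive integers is the only verification needed, and it is sound.
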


\section{Gaps for $(\alpha,\beta)$-colourings when $\delta \leq r- \beta$ and $\alpha \leq s(\sigma) \leq \beta$} \label{gaps_sec}

We continue on the same track started in the previous section.  While there we generalised the result of \cite{CaroLauri14} that $\delta \geq 2$ stops gaps from appearing in the NMNR-spectrum, here we prove that the example in \cite{CaroLauri14} showing that there can exist gaps in the $(2,2)$-spectrum when $\delta=2$ is not an isolated example but is part of the general result  that when $\Delta \geq \alpha$ and $\delta \leq r - \beta$, gaps appear in the $(\alpha,\beta)$-spectrum for a range of the parameters $q$ and $n$. 
\begin{theorem} \label{gaps}
Let $H=H(n,r,q \mid \sigma)$ and consider an $(\alpha,\beta)$-colouring for $H$, such that  $\Delta \geq \alpha$, $ \delta \leq r-\beta$ and $\alpha \leq s(\sigma) \leq \beta$ .  Let  \[q=(\beta-\alpha+1) \left \lfloor \frac{\Delta-1}{\alpha-1} \right \rfloor + \Delta -1, \] and \[n \geq  \binom{\beta +1}{\alpha -1}(s(\sigma)-1) + s(\sigma). \]  Then
\begin{enumerate}
\item{$\indent$H is not $k$-colourable for $k \leq \beta- 1$}
 
\item{$\indent$H is $\beta$-colourable}
 
\item{$\indent$H is not $(\beta+1)$-colourable}

\item{$\indent$H is $k$-colourable for \[k \in  \left [ \left \lceil \frac{ n-(s(\sigma)-1- ((\alpha -1) \left \lfloor \frac{s(\sigma)-1}{\alpha -1} \right \rfloor)) }{\left \lfloor \frac{s(\sigma) - 1}{\alpha - 1} \right \rfloor}   \right \rceil, n \right ]\]}
\end{enumerate}

\noindent Therefore the $(\alpha,\beta)$-spectrum of $H$ has a gap.
\end{theorem}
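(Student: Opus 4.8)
The plan is to prove the four items essentially independently and then read off the gap, after isolating one structural observation that drives everything. The key remark is that the prescribed value $q=(\beta-\alpha+1)\lfloor\frac{\Delta-1}{\alpha-1}\rfloor+\Delta-1$ is exactly $f(\alpha,\beta,\Delta)$ of Lemma \ref{partition1}. Hence, for a single class coloured with $c$ colours, Lemma \ref{partition1} says that if every $\alpha-1$ of its colours together occupy at most $\Delta-1$ vertices then the class has at most $f(\alpha,c,\Delta)$ vertices; and since $\Delta\ge\alpha$ forces $\lfloor\frac{\Delta-1}{\alpha-1}\rfloor\ge1$, we get $f(\alpha,c,\Delta)<q$ whenever $c\le\beta-1$. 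So a class carrying at most $\beta-1$ colours must contain some $(\alpha-1)$-subset of colours covering at least $\Delta$ vertices. I would record this standing fact first, as it feeds both the ``low edge'' and the ``high edge'' constructions.

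For item 2 I would colour every class identically by the extremal distribution realising $f(\alpha,\beta,\Delta)$: exactly $\beta$ colours with any $\alpha-1$ of them occupying at most $\Delta-1$ vertices. Only $\beta$ colours appear globally (so the upper bound is automatic), while every edge has a part of size $\Delta$ inside one class, and in that class no $\alpha-1$ colours fill $\Delta$ vertices, so that $\Delta$-part alone carries $\ge\alpha$ colours; thus $H$ is $\beta$-colourable. Item 1 is dual: for any colouring with $k\le\beta-1$ colours the upper constraint is vacuous, and by the standing fact every class has an $(\alpha-1)$-subset covering $\ge\Delta$ vertices. Assigning each class such a subset and applying pigeonhole over the at most $\binom{\beta-1}{\alpha-1}\le\binom{\beta+1}{\alpha-1}$ possible subsets, the bound on $n$ forces $s(\sigma)$ classes to share one set $C$; taking from each its prescribed part from within the $C$-coloured vertices (possible, since each holds $\ge\Delta\ge a_j$ of them) yields an edge with at most $\alpha-1$ colours, so no such colouring is legitimate. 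Item 4 is immediate from Theorem \ref{LBMZ}, since $\alpha\le s(\sigma)\le\beta$.

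Item 3 is where the real work lies. Assume a legitimate $(\beta+1)$-colouring. Call a class \emph{light} if some $(\alpha-1)$-subset of its colours covers $\ge\Delta$ vertices, and \emph{heavy} otherwise; by the standing fact a heavy class carries $\ge\beta$ colours, hence exactly $\beta$ or $\beta+1$. The same pigeonhole as in item 1 (light classes would otherwise force a $(<\alpha)$-edge) bounds the number of light classes by $\binom{\beta+1}{\alpha-1}(s(\sigma)-1)$, so the hypothesis on $n$ leaves at least $s(\sigma)$ heavy classes, each missing at most one colour. I then try to build one edge realising all $\beta+1$ colours. If no colour is missed by $s(\sigma)$ or more heavy classes, any $s(\sigma)$ heavy classes jointly contain every colour, and a Hall/defect argument distributes the $\beta+1$ colours among their parts (for a colour-set $T$ with $|T|\ge2$ every chosen class meets $T$, giving reachable capacity $r\ge\beta+1\ge|T|$, and singletons are covered by the joint-containment). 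If instead some colour $c_0$ is missed by at least $s(\sigma)$ heavy classes, I hand the smallest part $\delta$ to any class containing $c_0$ and the remaining $s(\sigma)-1$ parts, of total size $r-\delta\ge\beta$ since $\delta\le r-\beta$, to heavy classes missing only $c_0$; these carry all the other $\beta$ colours. Either way we get a $(\beta+1)$-coloured edge, a contradiction.

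Finally I assemble the gap. Items 1 and 2 give $\chi_{\alpha,\beta}=\beta$, item 3 removes $\beta+1$ from the spectrum, and item 4 places the whole monochromatic zone in the spectrum; it remains to check its lower endpoint exceeds $\beta+1$. By Lemma \ref{minb} this endpoint is at least $\frac{n-(s(\sigma)-1)}{\lfloor (s(\sigma)-1)/(\alpha-1)\rfloor}$, and since $\binom{\beta+1}{\alpha-1}\ge\beta+1$ the bound on $n$ makes this strictly larger than $\beta+1$. Thus $\beta$ and the foot of the monochromatic zone are colourable while $\beta+1$ is not, exhibiting a gap. The main obstacle is item 3, and within it the construction of the all-colour edge in the degenerate case where one colour is avoided by almost every heavy class; everything else is pigeonhole or a direct appeal to Lemma \ref{partition1}, Lemma \ref{minb}, and Theorem \ref{LBMZ}.
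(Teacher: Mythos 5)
Your proposal is correct, and most of its skeleton coincides with the paper's proof: the standing fact that $q=f(\alpha,\beta,\Delta)$ forces any class carrying at most $\beta-1$ colours to have some $\alpha-1$ colours covering at least $\Delta$ vertices, the pigeonhole over $(\alpha-1)$-subsets for item 1, the extremal distribution of Lemma \ref{partition1} for item 2, the appeal to Theorem \ref{LBMZ} for item 4, and the final check that the foot of the monochromatic zone exceeds $\beta+1$ are all exactly the paper's steps. The genuine difference is in item 3. The paper fixes $s=s(\sigma)$ ``colourful'' (your ``heavy'') classes $V_1,\dots,V_s$ and splits on whether some colour is missing from \emph{all of them}; in the complementary case it builds the $(\beta+1)$-coloured edge greedily, taking $a_j$ new colours from $V_j$ for $j\le s-1$, and repairs the possibly uncovered colour $x$ by a swap: the class $V_j$ owning $x$ contributed $a_j$ colours which must all reappear in $V_s$ (each colourful class misses at most one colour), so one exchanges one of them for $x$ in $V_j$ and recovers it from the $\delta$-part in $V_s$. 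You instead split on whether some colour is missed by at least $s$ heavy classes \emph{globally}, and in the main case you replace the greedy-plus-swap by a capacitated (defect) Hall argument: since each heavy class misses at most one colour, any colour set $T$ with $|T|\ge 2$ is met by all $s$ chosen classes, of total capacity $r\ge\beta+\delta\ge\beta+1\ge|T|$, and singletons are met by joint containment, so every colour can be assigned to a part of a class containing it. Both resolutions are valid (I checked your Hall condition and your degenerate case, where the $\delta$-part goes to a class containing $c_0$ and the remaining parts, of total size $r-\delta\ge\beta$, sweep up the other $\beta$ colours greedily); yours is shorter and more conceptual, avoiding the delicate re-assignment step, while the paper's is elementary and explicitly constructive. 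Your closing estimate
\[
LB \;\ge\; \frac{n-(s-1)}{\left\lfloor \frac{s-1}{\alpha-1}\right\rfloor} \;>\; \binom{\beta+1}{\alpha-1}\;\ge\;\beta+1
\]
is likewise a cleaner route to the same inequality than the paper's longer chain of estimates.
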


\begin{proof}

1)  Suppose $H$ is coloured with $k \leq \beta-1$ colours.  Consider a class of $H$ and let the number of vertices coloured $j$ in this class be $x_j$.  Then $\sum_{j=1}^{k} x_j = q$.  Let us order these as $x_1 \geq x_2 \geq \ldots \geq x_k \geq 1$

Since $k < \beta$, $ q > (k-\alpha+1) \left \lfloor \frac{\Delta-1}{\alpha-1} \right \rfloor + \Delta -1 $ .  Hence by Lemma \ref{partition1},  $\sum_{i=1}^{i=\alpha -1} x_i > \Delta -1$.  So the $\alpha -1$ colours which appear most frequently cover at least $\Delta$ vertices.

This is true in every colour class.  Since $k \leq \beta-1$ colours are used, and \[n \geq  \binom{\beta +1}{\alpha -1}(s(\sigma)-1) + s(\sigma)  \geq  \binom{\beta -1}{\alpha -1}(s(\sigma)-1) + s(\sigma), \] there are at least $s(\sigma)$ classes that contain a set of $\Delta$ vertices covered by the same set of $\alpha-1$ colours, giving an edge which uses $\alpha-1$ colours.
\medskip

2)  Now suppose we are given $\beta$ colours.  Clearly no edge can contain more than $\beta$ colours. If $\alpha - 1 | \Delta -1$ then $q=\beta( \frac{\Delta-1}{\alpha-1})$.  In this case,  colour the vertices in each class so that each colour is repeated exactly $\frac{\Delta-1}{\alpha-1}  $ times .  Then any set of $\alpha -1$  colours covers exactly $(\alpha-1)\frac{\Delta-1}{\alpha-1} = \Delta-1 < \Delta$ vertices.  Therefore no edge has less than $\alpha$ colours.

If $\alpha - 1 \not | \Delta -1$, then $(\Delta -1) - (\alpha-1) \left \lfloor \frac{\Delta-1}{\alpha-1} \right \rfloor$ colours appear $\left \lfloor \frac{\Delta-1}{\alpha-1} \right \rfloor + 1$ times, and the remaining colours appear $\left \lfloor \frac{\Delta-1}{\alpha-1} \right \rfloor$ times.  Then by Lemma \ref{partition1}, any set of $\alpha-1$ colours cover at most $\Delta-1$ vertices.  Hence for any set of $\Delta$ vertices we need at least $\alpha$ colours, and we use at most $\beta$ colours since we are using $\beta$ colours in all, making this a valid $(\alpha,\beta)$-colouring for $H$.
\medskip

3)  We will now show that $H$ is not $(\beta + 1)$-colourable.  For the rest of the proof, let the parts of the partition of $\sigma$ be denoted by $\Delta = a_1 \geq a_2 \geq .. \geq a_s = \delta$ in non-increasing order.  Note that $a_1+a_2+...+a_{s-1} = r-\delta \geq \beta$.

So suppose $H$ is coloured with the colours $1,2,..,\beta+1$.  We observe first that the same $\alpha -1$ colours can cover $\Delta$ vertices (or more) in at most $s(\sigma) - 1$ classes, otherwise we obtain an edge which has only  $(\alpha - 1)$ colours.  Also since there are $\beta + 1$ colours, there are at most $\binom{\beta + 1}{\alpha - 1}(s(\sigma) - 1)$ classes which contain some $\alpha -  1$ colours which cover at least $\Delta$ vertices.

But $n > \binom{\beta + 1}{\alpha - 1}(s(\sigma) - 1) + s(\sigma)$, so there are at least $s(\sigma)$ classes in which no set of $\alpha - 1$ colours cover more than $\Delta - 1$ vertices. Since  $q= \beta \left \lfloor \frac{\Delta-1}{\alpha-1} \right \rfloor + \Delta -1 - (\alpha-1)\left \lfloor \frac{\Delta-1}{\alpha-1} \right \rfloor$,  these classes must contain $\beta$ or $\beta + 1$ colours. Let us call these classes ``colourful" classes,and list them as $V_1,V_2,...V_s$.

We now consider two cases. Firstly, suppose some colour $x$ is missing from all these classes. Therefore $x$ must appear at least once in some other class. We may assume, by re-numbering, that $x = \beta + 1$, and that therefore each of these classes contains the colours $1, 2, . . . ,\beta$. In that case we can form an edge $K$ with $(\beta + 1)$ colours  by choosing the first $s(\sigma)- 1$ parts of the partition from $V_1, V_2,..V_{s-1}$ to include $\beta$ colours ( this is possible since $\beta \leq r-\delta$).  For the last part of $\sigma$ required for the edge, we choose $\delta$ vertices from the class which includes the vertex coloured $\beta + 1$, giving a $(\beta + 1)$- coloured edge. 

We now consider the second case, that is, when each of the colours $1, 2, . . . , \beta + 1$ appears at least once in one of these $s(\sigma)$ colourful classes. Again, we shall construct a $(\beta+ 1)$-coloured edge $K$. The first $\beta$ distinct colours in K are chosen by the previous greedy fashion: choose $a_j$  new distinct colours from the $V_j$, for $j = 1, 2, . . . , s-1$. We just need to assign the last colour to $K$.  Note that $K$ already contains all colours except one, call it $x$, say. If $x$ appears in $V_s$ then we assign it as the last colour to $K$ by choosing the remaining $\delta$ vertices to include the vertex coloured $x$.  Otherwise, $x$ must be in some $V_j$ from which we have chosen $a_j$ distinct colours. But then these $a_j$ colours must all appear in $V_s$ since $V_s$  misses colour $x$ and it can miss at most one colour. Therefore we re-assign to $K$ the colour $x$ from $V_j$ instead of some colour $y$ from the $a_j$ colours previously assigned, and then we assign to $K$ the colour $y$ from $V_s$ by choosing the vertex coloured $y$ in the last part of $\sigma$. This, again, gives us the $(\beta+1)$-coloured edge $K$, which is the final contradiction.
 \medskip

4) We have already shown that in the monochromatic zone, $H$ is $(\alpha,\beta)$-colourable.  It remains to show that the lower bound of the monochromatic zone, say $LB_{mz}$, is greater than $\beta + 1$.  We observe that  \[n= \binom{\beta + 1}{\alpha - 1}(s - 1) + s \geq (\beta+1)(s-1) + s\] since $\beta \geq \alpha$,  ($s = s(\sigma)$). So by Theorem \ref{LBMZ}, \[ LB_{mz} = \left \lceil \frac{ n-(s-1- ((\alpha -1) \left \lfloor \frac{s-1}{\alpha -1} \right \rfloor)) }{\left \lfloor \frac{s - 1}{\alpha - 1} \right \rfloor}  \right \rceil,\] which implies that 
 \begin{eqnarray*}
LB_{mz} &\geq&  \frac{ n-(s-1- ((\alpha -1) \left \lfloor \frac{s-1}{\alpha -1} \right \rfloor)) }{\left \lfloor \frac{s - 1}{\alpha - 1} \right \rfloor}  \\
&\geq&  \frac{ n-(s-1- ((\alpha -1) \left \lfloor \frac{s-1}{\alpha -1} \right \rfloor)) }{\frac{s - 1}{\alpha - 1} } \\
&\geq&  \frac{ (\beta+1)(s-1) + s -(s-1- ((\alpha -1) \left \lfloor \frac{s-1}{\alpha -1} \right \rfloor)) }{\frac{s - 1}{\alpha - 1} } \\
&=&  \frac{ (\beta+1)(s-1) + 1+ (\alpha -1) \left \lceil \frac{s - \alpha+1}{\alpha -1} \right \rceil}{\frac{s - 1}{\alpha - 1}}\\
&\geq& (\beta+1)(\alpha-1) + \frac{\alpha-1}{s-1} + \frac{(\alpha -1)^2(s-\alpha+1)}{(s-1)(\alpha-1)} \\
&=& (\beta+1)(\alpha-1) + \frac{\alpha-1}{s-1} + \frac{(\alpha -1)(s-\alpha+1)}{(s-1)} \\
&=&  (\beta+1)(\alpha-1) + \frac{\alpha-1}{s-1}(s-\alpha+2) > \beta +1, 
\end{eqnarray*}
 since $\alpha \geq 2$ and $s \geq \alpha $.

Hence there is a gap in the $(\alpha,\beta)$-spectrum of $H$, because $H$ is $\beta$-colourable, not $(\beta+1)$-colourable and is then again colourable in the monochromatic zone, with the lower bound being larger than $\beta+1$.
\end{proof}

\section{Conclusion} \label{conc}

In this paper we have managed, incoporating some new ideas, to extend most of the results of \cite{CaroLauri14}  to the more
general situation of $(\alpha, \beta)$-colourings, showing that $\sigma$-hypergraphs are
just as versatile here as they were with NMNR colourings. In particular we have
shown that techniques for analysing gaps in the spectrum, like the use of the
monochromatic zone, extend very smoothly to $(\alpha, \beta)$-colourings,  and we have
shown that the example of the disappearance and re-appearance of gaps given in \cite{CaroLauri14}
is not an isolated phenomenon but it can be understood within a general theory of
$(\alpha, \beta)$-colourings of $\sigma$-hypergraphs. Just as Voloshin colourings introduce
gaps where they did not exist in classical colourings of hypergraphs, for
$\sigma$-hypergraphs, constrained colourings introduce gaps where they did not exist in
Voloshin colourings. 

We finish this paper with an interesting open problem:  Investigate the existence or otherwise of gaps in the $(\alpha,\beta)$-spectrum of  $\sigma$-hypergraphs when $\delta \geq r- \beta +1$.

\bibliographystyle{plain}
\bibliography{paperCLZ1v4}

\section{Appendix}

\subsection{An example in which there is no monochromatic zone, and a gap in the $(\alpha,\beta)$-spectrum}

Let us consider $H=H(n,12,6 \mid \sigma=(6,6))$, and $(\alpha,\beta)=(3,3)$.  $H$ has no monochromatic zone since $s(\sigma) < \alpha=\beta$.  Thus $s(\sigma)=2 < \alpha =3$.  $H$ is $3$-colourable by colouring using each colour on 2 vertices in each class.  In this way one uses exactly $3$ colours.  It is also $(n+1)$-colourable by giving five vertices in each class the same colour (say 1) in all the classes, and giving the sixth vertex in each class a different colour for each class.  So H is $3$-colourable and $(n+1)$-colourable.

Consider 4 colours now.  No class can have 4 colours otherwise we can have an edge which includes 4 colours.  If some class contains 3 colours, then the 4th colour must be used in some other class, again giving an edge which includes 4 colours.  So let us suppose that each class uses at most 2 colours.  There are 6 ways of choosing 2 different colours out of 4.  There cannot be more than 1 class which includes only one colour, since otherwise we have an edge using only 2 colours.  So if $n \geq 7$, then we have the same 2 colours used in 2 different classes, giving an edge which uses 2 colours. Hence for $n \geq 7$, $H$ is not $4$-colourable.

This shows that there is a gap in the chromatic spectrum of $H$ under these conditions.  

\subsection{An example in which there is no monochromatic zone, and no gap in the $(\alpha,\beta)$-spectrum}

Now we consider $H(n,4,2  \mid \sigma =(2,2))$  with  $(\alpha,\beta)=(3,3)$.  Thus $s(\sigma)=2 < \alpha =3$ and $H$ has no monochromatic zone. $H$ is $(n+1)$-$(3,3)$-colourable - in each class $V_j$ we use the colours $0$ and $j$.  This requires exactly $n+1$ colours, and an edge taken from classes $V_i$ and $V_j$ will contain the colours $0,i,j$ and hence this is a valid $(3,3)$-colouring.

Now any valid $(3,3)$-colouring of $H$ requires that any pair of classes contain exactly 3 colours.  Now consider the case where no class is monochromatic. Then each classes uses two colours, and each pair of classes must have a common colour.  If $n>3$, this common colour must be the same throughout, and the other colour must be different in each class, otherwise we would get 2 classes with 4 different colours, or 2 classes using the same 2 colours, and both cases would give invalid colourings.  Hence for $n>3$, the only valid $(3,3)$-colouring is the one in which there is a common colour, say colour 0, throughout the classes, and a different colour in each class, that is the $(n+1)$-colouring previously described.

Now suppose there is a monochromatic class say $V_1$ coloured using colour 1.   Then every other class must have at least 2 colours  so that an edge intersecting $V_1$ has 3 colours.  So, except for class $V_1$, every other class must contain two colours, and every pair of these classes must have exactly three colours between them.  If there are more than 3 such classes, the only possibility is that all classes share one common colour (different from colour 1) and have another colour which is distinct for all the classes.  This gives another $(n+1)$-$(3,3)$-colouring.  So for $n>4$, the only valid $(3,3)$-colouring is that using $n+1$ colours.

Therefore, for $n>4$, $H$ is only $(n+1)$-$(3,3)$-colourable, and hence there are no gaps in the $(3,3)$-spectrum of $H$.

\end{document}